\numberwithin{equation}{section}
\theoremstyle{plain}
\newtheorem{theo}{Theorem}[section]
\newtheorem{coro}{Corollary}[section]
\newtheorem{prop}{Proposition}[section]
\newtheorem{lemm}{Lemma}[section]
\theoremstyle{definition}
\newtheorem{rema}{Remark}[section]
\def\Z{\mathbb Z}
\def\C{\mathbb C}
\def\R{\mathbb R}
\def\Q{\mathbb Q}
\def\MB{\mathcal B}
\newcommand{\uC}{\underline{\mathbb{C}}}
\DeclareMathOperator{\Aut}{Aut}
\def\ya{y^A}
\def\yb{y^B}
\def\xa{x^A}
\def\xb{x^B}
\def\xp{x^{PAP^{-1}}}
\def\yp{y^{PAP^{-1}}}
\def\q{q}
\def\ala{\alpha^A}
\def\alb{\alpha^B}
\def\height{{\rm ht}}
\begin{document}

\title[Invariance of Pontrjagin classes]{Invariance of Pontrjagin classes\\ for Bott manifolds}
\author[S. Choi]{Suyoung Choi}
\address{Department of Mathematics, Ajou University, Republic of Korea}
\email{schoi@ajou.ac.kr}
\author[M. Masuda]{Mikiya Masuda}

\address{Department of Mathematics, Osaka City University, Sumiyoshi-ku, Osaka 558-8585, Japan}
\email{masuda@sci.osaka-cu.ac.jp}
\author[S. Murai]{Satoshi Murai}
\address{Department of Mathematical Science, Faculty of Science, Yamaguchi University, 1677-1 Yoshida, Yamaguchi 753-8512, Japan}
\email{murai@yamaguchi-u.ac.jp}

\thanks{The first author partially supported by Basic Science Research Program through the National Research Foundation of Korea(NRF) funded by the Ministry of Education(NRF-2011-0024975) and the TJ Park Science Fellowship funded by the POSCO TJ Park Foundation.}
\thanks{The second author was partially supported by JSPS KAKENHI 25400095.}
\thanks{The third author was partially supported by JSPS KAKENHI 25400043.}

\maketitle

\begin{abstract}
A Bott manifold is the total space of some iterated $\C P^1$-bundle over a point.  We prove that any graded ring isomorphism between the cohomology rings of two Bott manifolds preserves their Pontrjagin classes.  Moreover, we prove that such an isomorphism is induced from a diffeomorphism if the Bott manifolds are $\Z/2$-trivial, where a Bott manifold is called $\Z/2$-trivial if its cohomology ring with $\Z/2$-coefficient is isomorphic to that of a product of $\C P^1$'s.
\end{abstract}

\section{Introduction}

One of the fundamental problems in topology is to classify manifolds (up to diffeomorphism, homeomorphism etc.) by invariants and cohomology rings are not sufficient to classify them in general.  For example, the surgery theory tells us  that there are infinitely many diffeomorphism types in the family of closed smooth manifolds homotopy equivalent to a complex projective space $\C P^n$ when $n\ge 3$.  However, the surgery theory further tells us that they are distinguished by their Pontrjagin classes up to finite ambiguity, and this is true in general for the family of closed smooth manifolds homotopy equivalent to a fixed closed smooth manifold $X$ where $X$ is simply connected and of dimension $\ge 5$.

On the other hand, we have a feeling that most of closed smooth manifolds do not admit an effective smooth $S^1$-action.  For example, T. Petrie \cite{petr72} conjectures that if $M$ is a closed smooth manifold homotopy equivalent to $\C P^n$ and $M$ admits an effective smooth $S^1$-action, then a homotopy equivalence $f\colon M\to \C P^n$ preserves their Pontrjagin classes.   Note that $\C P^n$ has an effective smooth action of $(S^1)^n$.  The conjecture is not solved but many partial affirmative solutions are obtained.  Among them, Petrie \cite{petr73} shows that the conjecture is true if $M$ admits an effective smooth action of $(S^1)^n$, see \cite{mani_atlas} for more details.

A complete non-singular toric variety (simply called a toric manifold) of complex dimension $n$ admits an effective algebraic action of $(\C^*)^n$, in particular, an effective smooth action of $(S^1)^n$.  The complex projective space $\C P^n$ is a typical example of a toric manifold.  Motivated by the Petrie's conjecture and his result mentioned above, the second named author and D.~Y.~Suh \cite{ma-su08} posed a problem which asks whether any cohomology ring isomorphism between toric manifolds preserves their Pontrjagin classes.  Little is known about the problem.  One of our main purposes is to show that the problem is affirmative for some nice class of toric manifolds called Bott manifolds.  We even show that any cohomology ring isomorphism is induced from a diffeomorphism for a certain subclass of Bott manifolds.

A Bott tower of height $n$ is a sequence of $\C P^1$-bundles
\begin{equation} \label{eq:1.1}
   B_n \stackrel{\pi_n}\longrightarrow B_{n-1} \stackrel{\pi_{n-1}}\longrightarrow \cdots \stackrel{\pi_2}\longrightarrow B_1 \stackrel{\pi_1}\longrightarrow B_0 = \{\text{a point}\},
\end{equation}
where $B_j$ is the projectivization $P(\uC\oplus L_j)$ of a trivial complex line bundle $\uC$ and a complex line bundle $L_j$ over $B_{j-1}$, and $\pi_j \colon B_j \to B_{j-1}$ is the projection for $j=1,2,\dots,n$.  It is known that $B_n$ is a toric manifold and $B_n$ is called an \emph{$n$-stage Bott manifold} or simply a \emph{Bott manifold}.

If all the fibrations in \eqref{eq:1.1} are trivial, then $B_n$ is diffeomorphic to $(\C P^1)^n$. The $1$-stage Bott manifold is $\C P^1$ and $2$-stage Bott manifolds are Hirzebruch surfaces.  As is well-known, there are only two diffeomorphism types among Hirzebruch surfaces; they are $(\C P^1)^2$ and $\C P^2\#\overline{\C P^2}$, where $\overline{\C P^2}$ is $\C P^2$ with the opposite orientation.  However, there are infinitely many diffeomorphism types among $n$-stage Bott manifolds when $n\ge 3$, and it is an interesting open question to classify them up to diffeomorphism or homeomorphism (\cite{cr-kr11}).

Our first main result is the following.

\begin{theo} \label{theo:0.1}
Any graded ring isomorphism between the cohomology rings (with integer coefficients) of two Bott manifolds preserves their Pontrjagin classes.
\end{theo}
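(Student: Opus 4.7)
The plan is to combine a direct computation of $p(TB_n)$ from the Bott tower structure with a structural analysis of graded ring isomorphisms between Bott manifold cohomologies.

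\textbf{Step 1: a closed formula for $p(TB_n)$.} Use the standard presentation $H^*(B_n;\Z) = \Z[x_1,\dots,x_n]/(x_j^2 + x_j\alpha_j)_{j=1}^n$, where $x_j = c_1(\xi_j^*)$ for $\xi_j$ the tautological line bundle on $B_j$ and $\alpha_j = c_1(L_j) = \sum_{i<j} a_{ij}x_i$. The Euler sequence at each stage $B_j = P(\uC \oplus L_j)$ gives a stable splitting of $TB_n$ into complex line bundles with Chern roots $\{x_j,\ x_j+\alpha_j\}_{j=1}^n$, hence
\[
  p(TB_n) = \prod_{j=1}^n (1+x_j^2)\big(1+(x_j+\alpha_j)^2\big).
\]
A short calculation using $x_j^2 = -x_j\alpha_j$ collapses the $j$-th factor to $1+\alpha_j^2$, giving the clean formula $p(TB_n) = \prod_{j=1}^n (1+\alpha_j^2)$.

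\textbf{Step 2: matching $\alpha_j$'s under an isomorphism.} Let $\varphi\colon H^*(B_n;\Z)\to H^*(B'_n;\Z)$ be a graded ring isomorphism. Applying $\varphi$ to each relation $x_j(x_j+\alpha_j)=0$ yields a pair of degree-$2$ elements whose product vanishes in $H^*(B'_n)$. Building on the structural analysis of Bott manifold cohomology from the cohomological rigidity program (Masuda--Panov, Choi--Masuda--Suh), I would show that, after composing $\varphi$ with ``harmless'' automorphisms induced by diffeomorphisms---reorderings of stages compatible with the tower order and the involutions $x_j \mapsto -(x_j+\alpha_j)$ coming from swapping the summands of $\uC \oplus L_j$---the resulting isomorphism sends each $\alpha_j$ to $\pm\alpha'_{\sigma(j)}$ for some permutation $\sigma$ of $\{1,\dots,n\}$. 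Since $(\pm\alpha'_{\sigma(j)})^2 = (\alpha'_{\sigma(j)})^2$ and the harmless automorphisms themselves preserve $p$, this gives $\varphi(p(TB_n)) = \prod_j(1+(\alpha'_{\sigma(j)})^2) = p(TB'_n)$.

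The hard part is Step 2: the rank-two relations leave genuine freedom in the images of the $x_j$'s, so pinning down $\varphi(\alpha_j)$ up to sign and reordering requires a delicate combinatorial analysis of the ``line-element'' structure of the ring. A cleaner alternative would be to find an intrinsic ring-theoretic characterization of $p(TB_n)$---for instance as a canonical element determined by the collection of quadratic relations in any Bott presentation---that bypasses the case-by-case analysis of isomorphisms altogether.
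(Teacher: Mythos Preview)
Your Step~1 is fine (modulo a sign convention). The gap is in Step~2: the assertion that, after composing with your ``harmless'' automorphisms, $\varphi$ sends each $\alpha_j$ to $\pm\alpha'_{\sigma(j)}$ is not merely hard to check---it is false in general. Set $y_j = x_j - \tfrac{1}{2}\alpha_j$ over $\Q$, so that $4y_j^2 = \alpha_j^2$. The paper shows (Proposition~\ref{prop:3.1}, Lemma~\ref{lemm:3.1}) that every integral isomorphism satisfies $\varphi(y^A_j) = q_j\, y^B_{\sigma(j)}$ with $q_j \in \{\pm\tfrac{1}{2},\pm 1,\pm 2\}$, and the values $\pm\tfrac{1}{2},\pm 2$ genuinely occur. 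Both of the automorphisms you list---stage reorderings and the swaps $x_j \mapsto -(x_j+\alpha_j)$---have every $q_j = \pm 1$, so composing with them only permutes the multiset $\{|q_j|\}$ and can never convert a $|q_j|=2$ into $1$. For the Hirzebruch surface $\C P^2\#\overline{\C P^2}$ (where $\alpha_1=0$, $\alpha_2=x_1$) the automorphism $(x_1,x_2)\mapsto(-x_1+2x_2,\,x_2)$ has $(q_1,q_2)=(2,\tfrac{1}{2})$ and sends $\alpha_2$ to $-x_1+2x_2$, which equals no $\pm\alpha_j$; no composition with your harmless maps repairs this (see Section~\ref{sect:7}). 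Since $\varphi(\alpha_j^2)=q_j^2\,(\alpha'_{\sigma(j)})^2$, the individual factor $1+\alpha_j^2$ is \emph{not} preserved when $|q_j|\neq 1$.

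The missing idea is a pairing argument. After a preliminary reduction (Lemma~\ref{lemm:4.1}) eliminating $\alpha_j$'s of ``even exceptional type'', the paper shows (Lemma~\ref{lemm:3.2}) that whenever $q_j=\pm\tfrac{1}{2}$ there is a companion index $i<j$ with $q_i=\pm 2$ and relations $\alpha^A_j=c\,y^A_i$, $\alpha^B_{\sigma(i)}=d\,y^B_{\sigma(j)}$ for odd integers $c,d$; squaring and chasing through $\varphi$ forces $c^2d^2=1$ whenever the relevant squares are nonzero. This yields $4(y^A_j)^2=(y^A_i)^2$ and $(y^B_{\sigma(j)})^2=4(y^B_{\sigma(i)})^2$, so the \emph{product} $(1+4(y^A_j)^2)(1+4(y^A_i)^2)$ is sent to $(1+4(y^B_{\sigma(j)})^2)(1+4(y^B_{\sigma(i)})^2)$ even though each factor alone is rescaled by $4^{\pm 1}$. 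A factor-by-factor normalisation via the diffeomorphism-induced automorphisms you propose is simply not available; the proof must accommodate the half-integral $q_j$'s directly.
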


If a graded ring isomorphism between the cohomology rings of two smooth manifolds is induced from a diffeomorphism, then the isomorphism preserves their Pontrjagin classes.  So, Theorem~\ref{theo:0.1} provides a supporting evidence to the following conjecture.

\medskip
\noindent
{\bf Strong cohomological rigidity conjecture for Bott manifolds.}  Any graded ring isomorphism between the cohomology rings (with integer coefficients) of two Bott manifolds is induced from a diffeomorphism.

\medskip

The conjecture above in particular claims that two Bott manifolds are diffeomorphic if their cohomology rings (with integer coefficients) are isomorphic as graded rings, and we call this weaker conjecture \emph{Cohomological rigidity conjecture for Bott manifolds}. No counterexamples are known to these conjectures and some partial affirmative solutions are known; for instance, the strong cohomological rigidity conjecture is affirmative for $\Q$-trivial Bott manifolds, where a Bott manifold $B_n$ is called \emph{$\Q$-trivial} if $H^*(B_n;\Q)\cong H^*((\C P^1)^n;\Q)$ as graded rings (\cite{ch-ma12}).  It is also affirmative up to $3$-stage Bott manifolds and the cohomological rigidity conjecture is affirmative for $4$-stage Bott manifolds (\cite{choi11}).

We say that a Bott manifold $B_n$ is \emph{$\Z/2$-trivial} if $H^*(B_n;\Z/2)\cong H^*((\C P^1)^n;\Z/2)$ as graded rings, where $\Z/2=\Z/2\Z$. Our second main result is the following.

\begin{theo} \label{theo:0.2}
The strong cohomological rigidity conjecture is affirmative for $\Z/2$-trivial Bott manifolds, namely any graded ring isomorphism between the cohomology rings (with integer coefficients) of two $\Z/2$-trivial Bott manifolds is induced from a diffeomorphism.
\end{theo}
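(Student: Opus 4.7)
The plan is to combine Theorem \ref{theo:0.1} with a structural analysis of graded ring isomorphisms between $\Z/2$-trivial Bott manifolds, and then to realize such an isomorphism as a composition of a short list of explicit diffeomorphisms coming from the iterated $\C P^1$-bundle structure. Throughout I would work with the standard presentation
$H^*(B_n;\Z) \cong \Z[x_1,\dots,x_n]/(x_i^2 - \alpha_i x_i)$, with $\alpha_i\in H^2(B_{i-1};\Z)$ a linear form in $x_1,\dots,x_{i-1}$. The $\Z/2$-trivial hypothesis translates to the statement that every coefficient of $\alpha_i$ is even, so $\alpha_i = 2\beta_i$ with $\beta_i$ an integral class. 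This parity condition is the one algebraic input that propagates through every later step.

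First I would pin down the ambiguity of the cohomology presentation. Working level by level in the tower and using the $\Z/2$-trivial relations $x_i(x_i-2\beta_i)=0$, I would show that an integral class whose square lies in the ideal of products of lower-stage generators must, up to an element of the ideal itself, be of the form $\pm x_j$ or $\pm(x_j-2\beta_j)$ for some $j$. Consequently, a graded ring isomorphism $\varphi\colon H^*(B_n;\Z)\to H^*(B'_n;\Z)$ must send each generator to a sign-$\pm$-unit-multiple of some generator of the target (possibly after replacing $x_j$ by $x_j-2\beta_j$), modulo contributions from earlier stages. This is the $\Z/2$-trivial analogue of the normal-form reduction already used in the $\Q$-trivial case treated by Choi--Masuda.

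Next I would factor $\varphi$ through a list of elementary moves, each of which I would realize geometrically: (a) the fiberwise involution of the $i$-th $\C P^1$-bundle, which induces $x_i\mapsto 2\beta_i - x_i$; (b) the orientation-reversing involution on a single fiber $\C P^1$ that changes the sign of $x_i$; (c) reordering the stages of the Bott tower, available whenever two consecutive bundles can be put in product form—which the $\Z/2$-trivial condition makes possible after a suitable change of presentation; and (d) replacing the classifying line bundle $L_i$ by $L_i\otimes M^{\otimes 2}$ for an auxiliary integral line bundle $M$ over $B_{i-1}$, which does not change the projective bundle up to diffeomorphism. Each of (a)--(d) can be written down as an explicit smooth map. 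Verifying that the group generated by the induced cohomology automorphisms together with the isomorphisms coming from biholomorphic reparametrizations of the tower exhausts all graded ring isomorphisms is the algebraic core of the argument.

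The main obstacle, in my view, lies in step (d) combined with the induction on $n$: one must coherently match two Bott tower presentations whose classifying data differ by even corrections at every level, and then glue the fiber diffeomorphisms over a diffeomorphism of bases already constructed by the inductive hypothesis. This is where Theorem \ref{theo:0.1} intervenes essentially, because it guarantees in advance that the target manifold has the correct Pontrjagin classes, ruling out potential obstructions that could prevent the inductive diffeomorphism from inducing the prescribed $\varphi$. I would therefore run the induction on the height $n$, using the normal-form reduction of the first step to align the presentations, invoking the inductive hypothesis on $B_{n-1}$ and $B'_{n-1}$ to identify the bases, and then constructing the top-level diffeomorphism by composing moves of type (a)--(d) whose combined effect on cohomology matches $\varphi$ on the nose.
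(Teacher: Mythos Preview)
Your outline diverges from the paper's argument in two substantive ways, and one of them is a genuine gap.

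First, the paper does \emph{not} use Theorem~\ref{theo:0.1} anywhere in the proof of Theorem~\ref{theo:0.2}. The logical flow is: $\Z/2$-triviality of both $M(A)$ and $M(B)$ forces $\alpha^A_j\equiv\alpha^B_j\equiv 0\pmod 2$ for all $j$ (Lemma~\ref{lemm:6.0}); by Lemma~\ref{lemm:3.1} this forces every $q_j$ in Proposition~\ref{prop:3.1} to lie in $\{\pm 1\}$; and then Proposition~\ref{prop:6.1} finishes by composing with a single permutation diffeomorphism (Lemma~\ref{lemm:6.1}) to reduce to Ishida's theorem (Theorem~\ref{theo:ishi}). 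There is no induction on $n$ at this level and no appeal to Pontrjagin-class preservation; the Pontrjagin result is a separate theorem, not an input. Your claim that Theorem~\ref{theo:0.1} ``intervenes essentially'' to rule out obstructions is therefore a misunderstanding of the architecture.

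Second, and more concretely, your elementary move (b) is not well-defined. The map $x_i\mapsto -x_i$ does not preserve the relation $x_i^2=\alpha_i x_i$ unless $\alpha_i=0$: one gets $(-x_i)^2-\alpha_i(-x_i)=2\alpha_i x_i$, which is nonzero in $H^*(B_n;\Z)$ whenever $\alpha_i\neq 0$, even in the $\Z/2$-trivial case. The sign changes that actually exist are $y_i\mapsto -y_i$, i.e.\ $x_i\mapsto \alpha_i-x_i$, which is already your move (a). What makes the paper's approach work is precisely the passage to the basis $y_j=x_j-\tfrac{1}{2}\alpha_j$, in which any graded isomorphism becomes monomial: $\psi(y^A_j)=q_j y^B_{\sigma(j)}$ (Proposition~\ref{prop:3.1}). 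Your ``normal-form reduction'' is aiming at something like this, but you never state or use the $y$-basis diagonalization, and without it the factorization into moves (a)--(d) has no clear starting point. In particular, the vague assertion that $\Z/2$-triviality makes stage-reordering ``possible after a suitable change of presentation'' needs exactly the content of Lemma~\ref{lemm:3.1-1} (that $a^i_j=0$ whenever $\sigma(i)>\sigma(j)$), which is what guarantees $PAP^{-1}$ remains strictly upper triangular.
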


There are infinitely many diffeomorphism types among $\Z/2$-trivial $n$-stage Bott manifolds when $n\ge 3$ while there are only finitely many diffeomorphism types among $\Q$-trivial $n$-stage Bott manifolds for any $n$.  Therefore, the family of $\Z/2$-trivial Bott manifolds is much larger than that of $\Q$-trivial Bott manifolds but the former family does not contain the latter, for instance, a 2-stage Bott manifold $\C P^2\#\overline{\C P^2}$ is not $\Z/2$-trivial but $\Q$-trivial.

The rigidity conjectures mentioned above are posed (as problems) more generally for toric manifolds or some related family of manifolds.  The strong cohomological rigidity does not hold for arbitrary toric manifolds while no counterexamples are known to the cohomological rigidity problems for toric manifolds. A real analogue of the rigidity problems is also studied.  See survey papers \cite{ch-ma-su12} and \cite{ma-su08} for details.

This paper is organized as follows.  In Section~\ref{sect:2} we review some known facts on the cohomology rings of Bott manifolds.  In Section~\ref{sect:3} we  introduce new bases of the cohomology rings and restate some facts mentioned in Section~\ref{sect:2}.  In Section~\ref{sect:4} we analyze graded ring isomorphisms between the cohomology rings of two Bott manifolds, where the new bases introduced in Section~\ref{sect:3} play a role.  We prove Theorem~\ref{theo:0.1} in Section~\ref{sect:5} and Theorem~\ref{theo:0.2} in Section~\ref{sect:6}.  Finally, in Section~\ref{sect:7} we make some remarks on automorphisms of the cohomology ring of a Bott manifold, which clarifies our difficulty to solve the strong cohomological rigidity conjecture for Bott manifolds completely.

\section{Cohomology rings of Bott manifolds} \label{sect:2}

In this section we will recall some known facts on the cohomology rings of Bott manifolds and the quotient construction of Bott manifolds.

We denote by $\alpha_j$ the first Chern class of the complex line bundle $L_j$ used to construct the Bott tower \eqref{eq:1.1}.
It follows from the Borel-Hirzebruch formula (\cite{bo-hi58}) that $H^\ast(B_j;\Z)$ is a free module over $H^\ast(B_{j-1};\Z)$ through the map $\pi_j^\ast\colon H^*(B_{j-1};\Z)\to H^*(B_j;\Z)$ on two generators $1$ and $x_j$ of degree $0$ and $2$ respectively, where $x_j$ is the first Chern class of the tautological line bundle $\gamma_j$ over $B_j$, and that the ring structure is determined by the single relation
$$
    x_j^2 = \pi_j^\ast(\alpha_j) x_j.
$$
Using the formula inductively on $j$ and regarding $H^*(B_j;\Z)$ as a subring of $H^*(B_n;\Z)$ through the projections in \eqref{eq:1.1}, we see that
\begin{equation} \label{eq:1.2}
    H^*(B_n;\Z)=\Z[x_1, \dots, x_n]/(x_j^2 - \alpha_jx_j \mid j=1,2,\dots,n)
\end{equation}
where $\alpha_1=0$.
The following lemma easily follows from \eqref{eq:1.2}.

\begin{lemm} \label{lemm:1.1}
Let $k$ be a positive integer less than or equal to $n$.  Then the set
\[
\{x_{i_1}x_{i_2}\dots x_{i_k}\mid 1\le i_1<i_2<\dots <i_k\le n\}
\]
is an additive basis of $H^{2k}(B_n;\Z)$.
\end{lemm}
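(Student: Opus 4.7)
The plan is to prove the statement by induction on $n$, exploiting the fact recalled just before the lemma that $H^*(B_j;\Z)$ is a free module over $H^*(B_{j-1};\Z)$ on the basis $\{1, x_j\}$. This immediately tells us the rank and gives us a natural $\Z$-module decomposition, and the only work is assembling it correctly across all stages and then sorting by cohomological degree.

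First I would set up the base case $n=1$: since $\alpha_1=0$ we have $H^*(B_1;\Z)=\Z[x_1]/(x_1^2)$, which is free on $\{1, x_1\}$, matching the claimed basis (the empty product for $k=0$ and $x_1$ for $k=1$). For the inductive step, assume $H^*(B_{n-1};\Z)$ is a free $\Z$-module on the squarefree monomials $x_{i_1}\cdots x_{i_k}$ with $1 \le i_1<\cdots<i_k\le n-1$. Because $H^*(B_n;\Z)$ is free over $H^*(B_{n-1};\Z)$ on $\{1,x_n\}$ via $\pi_n^*$, the collection obtained by multiplying each inductive basis element by either $1$ or $x_n$ is a $\Z$-basis of $H^*(B_n;\Z)$. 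This is precisely the set of squarefree monomials in $x_1,\dots,x_n$.

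Next I would verify that the grading works out. Each $x_i$ has degree $2$, so a squarefree monomial in $k$ variables sits in $H^{2k}(B_n;\Z)$. Collecting the basis elements by cardinality of the index set yields the stated basis of $H^{2k}(B_n;\Z)$ for each $k$.

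The only subtlety worth mentioning — and the step I expect a reader to want to double-check — is that the iterated freeness really does produce the product basis described; concretely, that the relation $x_n^2=\pi_n^*(\alpha_n)x_n$ never creates linear dependences among the squarefree monomials. This is automatic from the Borel–Hirzebruch freeness statement (the relation only rewrites $x_n^2$ in terms of $\pi_n^*(\alpha_n)\cdot x_n$, which lies in $H^*(B_{n-1};\Z)\cdot x_n$ and is already accounted for by the inductive basis). So the proof reduces to invoking \eqref{eq:1.2} and the freeness result, with no further computation needed.
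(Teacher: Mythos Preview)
Your argument is correct and is exactly the unpacking of what the paper means by ``easily follows from \eqref{eq:1.2}'': the paper gives no proof at all beyond that remark, and your induction on $n$ using the freeness of $H^*(B_j;\Z)$ over $H^*(B_{j-1};\Z)$ on $\{1,x_j\}$ is the standard way to make that sentence precise.
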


The Pontrjagin class of a Bott manifold has a simple expression as is shown in the following lemma.

\begin{lemm} \label{lemm:pBn}
The Pontrjagin class $p(B_n)$ of the Bott manifold $B_n$ is given by
\begin{equation*} \label{eq:1.3}
p(B_n)=\prod_{j=1}^n(1+(2x_j-\alpha_j)^2)=\prod_{j=1}^n(1+\alpha_j^2) \in H^*(B_n;\Z)
\end{equation*}
where $(2x_j-\alpha_j)^2=\alpha_j^2$ because $x_j^2=\alpha_jx_j$.
\end{lemm}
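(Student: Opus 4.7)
The plan is to reduce $p(B_n)$ to an explicit product over the stages of the Bott tower \eqref{eq:1.1}, using the standard formula $p(L)=1+c_1(L)^2$ for the Pontrjagin class of the underlying real bundle of a complex line bundle $L$. The tangent bundle $TB_n$ carries a natural filtration from the successive $\C P^1$-bundle projections: each $\pi_j$ gives a short exact sequence
$$ 0\to T_{B_j/B_{j-1}}\to TB_j\to \pi_j^*TB_{j-1}\to 0,$$
whose kernel is a complex line bundle since $\pi_j$ is a $\C P^1$-bundle. Iterating over $j$ (and pulling everything back to $B_n$) shows that $TB_n$ is stably equivalent to $\bigoplus_{j=1}^n T_{B_j/B_{j-1}}$.

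Concretely, for each $j$ the Euler sequence for the projectivization $B_j=P(\uC\oplus L_j)$ is
$$ 0\to \uC\to \gamma_j^*\otimes\pi_j^*(\uC\oplus L_j)\to T_{B_j/B_{j-1}}\to 0,$$
so stably $T_{B_j/B_{j-1}}\oplus\uC\cong \gamma_j^*\oplus(\gamma_j^*\otimes\pi_j^* L_j)$. Taking first Chern classes yields
$$c_1(T_{B_j/B_{j-1}})=(-x_j)+(\alpha_j-x_j)=\alpha_j-2x_j,$$
and hence $p(T_{B_j/B_{j-1}})=1+(2x_j-\alpha_j)^2$. Combining the stable splitting of $TB_n$ with multiplicativity of the total Pontrjagin class over direct sums gives the first equality of the lemma.

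A small subtlety I would need to address is that the Whitney product formula $p(V\oplus W)=p(V)p(W)$ holds integrally only modulo $2$-torsion in general, since the odd Chern classes of the complexification of a real bundle are $2$-torsion. In our setting this is not an issue: Lemma~\ref{lemm:1.1} shows that $H^*(B_n;\Z)$ is torsion-free, so multiplicativity holds on the nose. This is the only step requiring genuine care. The second equality of the lemma is then a one-line algebraic reduction inside $H^*(B_n;\Z)$: the defining relation $x_j^2=\alpha_j x_j$ gives $(2x_j-\alpha_j)^2=4x_j^2-4\alpha_j x_j+\alpha_j^2=\alpha_j^2$, exactly as recorded in the statement.
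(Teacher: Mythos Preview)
Your proof is correct and follows essentially the same strategy as the paper: decompose $TB_n$ via the tower into vertical line bundles, compute each $p(T_{B_j/B_{j-1}})=1+(2x_j-\alpha_j)^2$, and invoke torsion-freeness of $H^*(B_n;\Z)$ to make the Whitney product exact. The only cosmetic difference is that the paper quotes the Borel--Hirzebruch formula for $c(T_fB_j)=1-2x_j+\alpha_j$ directly, whereas you derive the same first Chern class from the relative Euler sequence; both routes are standard and yield the identical conclusion.
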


\begin{proof}
The lemma is known but since there seems no literature which mentions the formula explicitly, we shall give a proof.

Since $\pi_j\colon B_j\to B_{j-1}$ is the projectivization of the Whitney sum of the trivial line bundle $\uC$ and the line bundle $L_j$ over $B_{j-1}$,  the tangent bundle $TB_j$ of $B_j$ splits into
\begin{equation} \label{eq:pBj}
TB_j=T_f B_j\oplus \pi_j^*(TB_{j-1})
\end{equation}
where $T_fB_j$ denotes the complex line bundle along the fibers of the fiber bundle $\pi_j\colon B_j\to B_{j-1}$ and $\pi_j^*(TB_{j-1})$ is the pullback of the tangent bundle $TB_{j-1}$ of the base space $B_{j-1}$ by the projection $\pi_j$.  Since $x_j$ is the first Chern class of the tautological line bundle $\gamma_j$ over $B_j$ and the total Chern class of $\uC\oplus L_j$ is $1+\alpha_j$, it follows from \cite[(2) in p.515]{bo-hi58} that the total Chern class of the complex line bundle $T_fB_j$ is given by $1-2x_j+\alpha_j$ and hence its total Pontrjagin class is given by $1+(2x_j-\alpha_j)^2$. This together with \eqref{eq:pBj} shows that
\[
p(B_j)=(1+(2x_j-\alpha_j)^2)p(B_{j-1})
\]
because $H^\ast(B_n;\Z)$ has no 2-torsion. Then the lemma follows by applying the above formula inductively on $j$.
\end{proof}

For $u=\sum_{i=1}^nc_ix_i \in H^2(B_n;\Z)$ with $c_i\in \Z$, we define
\[
\height(u)=\max\{i\in [n]=\{1,2,\dots,n\}\mid c_i\not=0\}
\]
and call it the \emph{height} of $u$.  Note that $\height(\alpha_j)<\height(x_j)=j$.
We say that a pair of primitive elements in $H^2(B_n;\Z)$ is a \emph{primitive vanishing pair} if the product of the elements vanishes.  Note that a pair $(x_j, x_j-\alpha_j)$ is a primitive vanishing pair for any $j\in [n]$.

\begin{lemm}[Lemma 2.2 in \cite{ch-ma12}] \label{lemm:1.3}
A primitive vanishing pair is of the form
\[
\text{$(ax_j+u,a(x_j-\alpha_j)-u)$ or $(ax_j+u,-a(x_j-\alpha_j)+u)$}
\]
where $a\in\Z\backslash\{0\}$, $u\in H^2(B_n;\Z)$, $u(u+a\alpha_j)=0$ and $\height(u)<j$.
\end{lemm}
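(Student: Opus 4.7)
The plan is to decompose the relation $v_1 v_2 = 0$ into pieces that are separately zero by using the squarefree-monomial basis of Lemma~\ref{lemm:1.1}, and then to exploit primitivity to pin down the integer coefficients.

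First, I would argue that both members of a primitive vanishing pair must have a common height. Let $v_1, v_2 \in H^2(B_n;\Z)$ be a primitive vanishing pair and set $j = \max\{\height(v_1), \height(v_2)\}$. If only one of them, say $v_1$, has height $j$, then writing $v_1 = a x_j + u_1$ and $v_2 = u_2$ with $a \neq 0$ and $u_1, u_2 \in \bigoplus_{i<j} \Z x_i$, one computes $v_1 v_2 = a x_j u_2 + u_1 u_2$. The first summand is a $\Z$-combination of distinct squarefree monomials $x_i x_j$ ($i<j$), while $u_1 u_2$ involves only squarefree monomials in $x_1,\dots,x_{j-1}$; these are linearly independent by Lemma~\ref{lemm:1.1}, so $a u_2 = 0$, forcing $u_2 = 0$ and contradicting primitivity of $v_2$. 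Hence $\height(v_1) = \height(v_2) = j$. Writing $v_1 = a x_j + u_1$, $v_2 = b x_j + u_2$ with $a, b \neq 0$ and $\height(u_1), \height(u_2) < j$, and using $x_j^2 = \alpha_j x_j$, I expand
\[
v_1 v_2 = (ab\alpha_j + au_2 + bu_1) x_j + u_1 u_2.
\]
Again by Lemma~\ref{lemm:1.1}, the two summands use disjoint squarefree basis elements and so vanish separately. Thus $u_1 u_2 = 0$ in $H^*(B_{j-1};\Z)$ and, setting $w := u_1 + a\alpha_j$, we have the linear identity $a u_2 + b w = 0$ in the free abelian group $H^2(B_{j-1};\Z) = \bigoplus_{i<j}\Z x_i$.

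The main step, and the place where one must be careful, is to use primitivity to conclude $b = \pm a$. Let $g = \gcd(a,b)$ and write $a = ga'$, $b = gb'$ with $\gcd(a',b')=1$. The integer solutions $(d,e)$ of $ad + be = 0$ form the cyclic subgroup generated by $(b', -a')$, so coordinate-wise the identity $au_2 + bw = 0$ forces $u_2 = b' s$ and $w = -a' s$ for some $s \in H^2(B_{j-1};\Z)$. The primitivity of $v_1$ amounts to $\gcd(a, \text{coeffs of } u_1) = \gcd(a,\text{coeffs of } w) = 1$ (since $a\alpha_j$ has coefficients divisible by $a$), which forces $|a'| = 1$; similarly $|b'|=1$, giving $b = \pm a$. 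In the case $b = a$ one gets $u_2 = -u_1 - a\alpha_j$, yielding the first advertised form with $u = u_1$; the case $b = -a$ gives $u_2 = u_1 + a\alpha_j$, yielding the second form. Finally, the surviving relation $u_1 u_2 = 0$ translates in either case into $u(u + a\alpha_j) = 0$ (since $u + a\alpha_j$ equals $\pm u_2$), giving the last condition. The main obstacle is the arithmetic bookkeeping at this last step; conceptually, everything follows once the linear identity in $H^2(B_{j-1};\Z)$ has been extracted via Lemma~\ref{lemm:1.1}.
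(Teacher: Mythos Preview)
The paper does not actually prove this lemma; it quotes it verbatim as Lemma~2.2 of \cite{ch-ma12} and gives no argument. So there is nothing in the present paper to compare against beyond the statement itself.

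That said, your proof is correct and is essentially the argument one expects (and presumably the one in \cite{ch-ma12}): isolate the top variable $x_j$, use the squarefree basis of Lemma~\ref{lemm:1.1} to split $v_1v_2=0$ into the linear relation $ab\alpha_j+au_2+bu_1=0$ in $H^2(B_{j-1};\Z)$ and the quadratic relation $u_1u_2=0$, and then use primitivity to force $b=\pm a$. The arithmetic step---rewriting the linear relation as $au_2+bw=0$ with $w=u_1+a\alpha_j$, observing that $\gcd(a,\text{coeffs of }w)=\gcd(a,\text{coeffs of }u_1)=1$, and deducing $|a'|=|b'|=1$---is handled cleanly and covers the degenerate cases ($u_1=0$, $w=0$) as well, since $\gcd(a,0,\ldots,0)=|a|$. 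No gaps.
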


\begin{coro}[Corollary 2.1 in \cite{ch-ma12}] \label{coro:1.1}
Primitive square zero elements in $H^2(B_n;\Z)$ are of the form $\pm(x_j-\frac{1}{2}\alpha_j)$ if $\alpha_j\equiv 0\pmod{2}$ and $\pm(2x_j-\alpha_j)$ otherwise.
\end{coro}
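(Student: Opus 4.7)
The plan is to derive the classification directly from Lemma~\ref{lemm:1.3} by observing that if $v$ is a primitive element of $H^2(B_n;\Z)$ with $v^2=0$, then $(v,v)$ is itself a primitive vanishing pair. First I would set $j=\height(v)$ and write $v=ax_j+u$ with $a\ne 0$ and $\height(u)<j$. Applying Lemma~\ref{lemm:1.3} to the pair $(v,v)$ forces one of two possibilities: either $v=a(x_j-\alpha_j)-u$ or $v=-a(x_j-\alpha_j)+u$.

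Equating $ax_j+u$ with the second alternative would give $2ax_j=a\alpha_j$, which is impossible since $a\ne 0$ and $\height(\alpha_j)<j=\height(x_j)$. So only the first alternative can occur, and equating $ax_j+u=a(x_j-\alpha_j)-u$ yields $2u=-a\alpha_j$, whence $u=-\frac{a}{2}\alpha_j$ and $v=a(x_j-\frac{1}{2}\alpha_j)$.

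It then remains to impose integrality of $u$ together with primitivity of $v$, and this splits on the parity of $\alpha_j$. If $\alpha_j\equiv 0\pmod 2$, then $x_j-\frac{1}{2}\alpha_j$ is already an integral class of $H^2(B_n;\Z)$ whose $x_j$-coefficient is $1$, hence primitive, so $a=\pm 1$ and $v=\pm(x_j-\frac{1}{2}\alpha_j)$. If $\alpha_j\not\equiv 0\pmod 2$, then integrality of $\frac{a}{2}\alpha_j$ forces $a$ to be even; writing $a=2a'$ gives $v=a'(2x_j-\alpha_j)$, and $2x_j-\alpha_j$ is primitive since $\alpha_j$ contributes at least one odd coefficient (so the gcd of the coefficients of $v$ is odd, and coprime to $2$), hence primitivity of $v$ forces $a'=\pm 1$ and $v=\pm(2x_j-\alpha_j)$.

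I do not anticipate a real obstacle: the argument is essentially an unpacking of Lemma~\ref{lemm:1.3} with the two coordinates of the pair equal. The only mildly delicate step is ruling out the second form of the pair, which reduces to the height inequality $\height(\alpha_j)<\height(x_j)$ together with $a\ne 0$; everything else is bookkeeping on the parity of $\alpha_j$. Note that the side condition $u(u+a\alpha_j)=0$ from Lemma~\ref{lemm:1.3} becomes a further constraint $\frac{a^2}{4}\alpha_j^2=0$ governing when such a $v$ actually exists, but since the corollary only describes the \emph{shape} of primitive square zero elements, this constraint need not be produced explicitly.
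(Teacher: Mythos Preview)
Your argument is correct. The paper itself does not supply a proof of this corollary; it simply cites it from \cite{ch-ma12} as a consequence of Lemma~\ref{lemm:1.3}. Your approach of applying Lemma~\ref{lemm:1.3} to the degenerate pair $(v,v)$, ruling out the second form via the height inequality, and then resolving the parity of $\alpha_j$ to pin down the primitive scalar is exactly the intended derivation, and all the bookkeeping (integrality of $u=-\tfrac{a}{2}\alpha_j$, primitivity of $x_j-\tfrac12\alpha_j$ and of $2x_j-\alpha_j$) is handled correctly.

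One minor remark: your closing comment that the side condition $u(u+a\alpha_j)=0$ reduces to $\alpha_j^2=0$ is right, and you are also right that the corollary as stated only constrains the \emph{shape} of primitive square-zero elements and does not assert that every $j$ yields one; this matches how the corollary is actually used later in the paper (e.g.\ in the base case of Proposition~\ref{prop:3.1}).
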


We shall review the quotient construction of Bott manifolds (\cite[Prop.\ 3.1]{ci-ra05}).  Remember that $\alpha_1=0$ and express
\begin{equation} \label{eq:1.2-1}
\alpha_j=\sum_{i=1}^{j-1} A^i_j x_i \quad  \text{for $j=2, 3,\dots, n$}
\end{equation}
with $A^i_j \in \Z$.
Let $S^{2d-1}$ be the unit sphere of $\C^d$ for $d=1,2$.
Then the Bott manifold $B_n$ in \eqref{eq:1.1} can be obtained as the quotient of $(S^3)^n$ by the free action of $(S^1)^n$ defined by
\begin{equation} \label{eq:1-a}
\begin{split}
&(g_1,g_2,\dots,g_n)\cdot \big((z_1,w_1),(z_2,w_2),\dots, (z_n,w_n)\big)\\
=& \big((g_1z_1,g_1w_1), ((g_1^{-A^1_2})g_2z_2,g_2w_2),\dots, ((\prod_{k=1}^{n-1}g_k^{-A^k_n})g_nz_n,g_nw_n)\big) 
\end{split}
\end{equation}
where $g_i\in S^1$ and $(z_i,w_i)\in S^3$ for $i=1,2,\dots,n$.  Projections
\[
\big((z_1,w_1),\dots, (z_n,w_n)\big)\to \big((z_1,w_1),\dots, (z_{n-1},w_{n-1})\big)\to \dots \to (z_1,w_1)
\]
induce the Bott tower \eqref{eq:1.1}.  The tautological line bundle $\gamma_j$ over $B_j$ can be described as the quotient of the trivial complex line bundle $(S^3)^n\times \C\to (S^3)^n$ by the action of $(S^1)^n$ on the total space $(S^3)^n\times \C$ defined as follows:
\[
 g\cdot ((z,w),u)=(g\cdot(z,w),g_j^{-1}u)\qquad\text{$(u\in\C)$},
\]
where $g=(g_1,g_2,\dots,g_n)$, $(z,w)=\big((z_1,w_1),(z_2,w_2),\dots, (z_n,w_n)\big)$ and $g\cdot (z,w)$ denotes the action defined in \eqref{eq:1-a}.

\section{Base change} \label{sect:3}

We set
\begin{equation} \label{eq:2.1}
y_j=x_j-\frac{1}{2}\alpha_j \quad\text{for $j=1,2,\dots,n$.}
\end{equation}
Then, we have
\begin{equation} \label{eq:2.1-0}
4y_j^2=(2x_j-\alpha_j)^2=\alpha_j^2 \quad\text{ in $H^\ast(B_n;\Z)$},
\end{equation}
and it follows from Lemma~\ref{lemm:pBn} that
\begin{equation} \label{eq:2.1-1}
p(B_n)=\prod_{i=1}^n(1+4y_j^2).
\end{equation}

If $\alpha_j\equiv 0\pmod{2}$, then $y_j$ is integral, that is, an element of $H^2(B_n;\Z)$.

\begin{lemm} \label{lemm:2.1}
If $\alpha_i\equiv \alpha_j\equiv 0\pmod{2}$ and $y_i\equiv y_j\pmod{2}$, then $i=j$.
\end{lemm}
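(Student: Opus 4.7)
The plan is to extract the required contradiction directly from the expansion of the $y_k$'s in the additive basis supplied by Lemma~\ref{lemm:1.1}.

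First I would record the shape of each $y_k$: by \eqref{eq:1.2-1}, $\alpha_k=\sum_{\ell<k}A_k^\ell x_\ell$ lies in the $\Z$-span of $x_1,\dots,x_{k-1}$, so as soon as $\alpha_k$ is even, the element $y_k=x_k-\tfrac{1}{2}\alpha_k$ is integral, lies in the $\Z$-span of $x_1,\dots,x_k$, and, crucially, has $x_k$-coefficient exactly $1$. In particular, under the hypothesis $\alpha_k\equiv 0\pmod 2$ we have $\height(y_k)=k$ with leading coefficient $1$.

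Next, by Lemma~\ref{lemm:1.1} the family $\{x_1,\dots,x_n\}$ is a $\Z$-basis of $H^2(B_n;\Z)$, so the congruence $y_i\equiv y_j\pmod 2$ is equivalent to the coefficientwise congruence modulo~$2$ of the expansions of $y_i$ and $y_j$ in this basis. Suppose for contradiction that $i\ne j$; by symmetry we may assume $i<j$. The $x_j$-coefficient of $y_i$ is then $0$ (since $y_i$ lies in the span of $x_1,\dots,x_i$), whereas the $x_j$-coefficient of $y_j$ is $1$. Since $0\not\equiv 1\pmod 2$, this contradicts the hypothesis, and we conclude $i=j$.

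There is no real obstacle here: the lemma simply records that the change of variables $x_k\mapsto y_k$ is upper triangular with $1$'s on the diagonal with respect to the basis of Lemma~\ref{lemm:1.1}, so the integral $y_k$'s remain linearly independent after reduction modulo~$2$.
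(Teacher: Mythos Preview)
Your argument is correct and is essentially the paper's own proof, written out in more detail: both proofs expand $y_i$ and $y_j$ in the basis $x_1,\dots,x_n$ of Lemma~\ref{lemm:1.1}, observe that $y_k$ has $x_k$-coefficient $1$ and lower height otherwise (since $\height(\alpha_k)<k$), and read off $i=j$ from the leading coefficient modulo $2$. There is no substantive difference in approach.
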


\begin{proof}
Since $\height(\alpha_i)<i$ and $\height(\alpha_j)<j$, the assumption $y_i\equiv y_j\pmod{2}$ implies $x_i\equiv x_j\pmod{2}$ and hence $i=j$ because $\{x_1,x_2,\dots,x_n\}$ is an additive basis of $H^2(B_n;\Z)$ (cf. Lemma~\ref{lemm:1.1}).
\end{proof}

Let $A^i_j$ $(i<j)$ be the integers defined in \eqref{eq:1.2-1}.  Setting $A^i_j=0$ for $i\ge j$, we obtain an  integral strictly upper triangular $n\times n$ matrix $A$ with $A^i_j$ as the $(i,j)$ entry.
Then it follows from \eqref{eq:2.1} that
\[
(y_1,\dots,y_n)=(x_1,\dots,x_n)(E-\frac{1}{2}A),
\]
where $E$ is the identity matrix.
Here $E-\frac{1}{2}A$ is an upper triangular unipotent matrix and so is its inverse.  Therefore, if we denote the $(i,j)$ entry of $(E-\frac{1}{2}A)^{-1}$ by $a^i_j$, then
\begin{equation} \label{eq:2.2}
x_j=\sum_{i\le j}a^i_jy_i\quad\text{and}\quad
\frac{1}{2}\alpha_j=x_j-y_j=\sum_{i<j}a^i_jy_i.
\end{equation}
By \eqref{eq:2.1-0} and \eqref{eq:2.2},
we have
\begin{equation} \label{eq:2.3}
y_j^2 =\big( \frac{\alpha_j}{2} \big)^2 =  \big(\sum_{i<j}a^i_jy_i\big)^2\quad\text{in $H^*(B_n;\Q)$}.
\end{equation}

The following lemma easily follows from \eqref{eq:2.3} or Lemma~\ref{lemm:1.1}.

\begin{lemm} \label{lemm:2.2}
Let $k$ be a positive integer less than or equal to $n$.  Then the set
\[
\{y_{i_1}y_{i_2}\dots y_{i_k}\mid 1\le i_1<i_2<\dots <i_k\le n\}
\]
is an additive basis of $H^{2k}(B_n;\Q)$ over $\Q$.
\end{lemm}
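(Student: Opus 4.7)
The plan is to use Lemma~\ref{lemm:1.1} to fix the dimension and then deduce the spanning property from the relations \eqref{eq:2.3}. Tensoring Lemma~\ref{lemm:1.1} with $\Q$ gives $\dim_\Q H^{2k}(B_n;\Q)=\binom{n}{k}$, which coincides with the cardinality of the proposed set; therefore, it suffices to show that those monomials $\Q$-span $H^{2k}(B_n;\Q)$.

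For the spanning claim I would express an arbitrary class in the $x$-basis furnished by Lemma~\ref{lemm:1.1}, then substitute $x_j=\sum_{i\le j}a^i_j y_i$ from \eqref{eq:2.2}. The result is a $\Q$-linear combination of monomials in $y_1,\dots,y_n$, not a priori squarefree. I would then remove all repeated factors by iteratively applying \eqref{eq:2.3}, i.e.\ the relation $y_j^2=\bigl(\sum_{i<j}a^i_j y_i\bigr)^2$ in $H^*(B_n;\Q)$.

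The essential technical step is to show that this reduction terminates. I would proceed by induction on the pair $(j,e)$, ordered lexicographically, where $j$ is the largest index with exponent at least $2$ in the current $y$-monomial and $e$ is that exponent. The base case covers the absence of any squared factor (so the monomial is already squarefree) and also $y_1^2=0$, which holds since $\alpha_1=0$. For the inductive step, replacing one factor $y_j^2$ by $\bigl(\sum_{i<j}a^i_j y_i\bigr)^2$ produces a sum of monomials in which either the exponent at $j$ has dropped by $2$ with all new squared factors lying at indices $<j$, or the maximum squared index has strictly decreased. In each branch the pair $(j,e)$ strictly decreases, so after finitely many steps one obtains a $\Q$-linear combination of squarefree monomials $y_{j_1}\cdots y_{j_k}$ with $j_1<\cdots<j_k$.

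The main obstacle, such as it is, is just this termination and bookkeeping argument; once it is in place, spanning combined with the dimension count $\binom{n}{k}$ from Lemma~\ref{lemm:1.1} forces the proposed set to be a $\Q$-basis of $H^{2k}(B_n;\Q)$.
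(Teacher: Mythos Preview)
Your argument is correct and is precisely a spelled-out version of what the paper merely hints at: the paper records no proof beyond the remark that the lemma ``easily follows from \eqref{eq:2.3} or Lemma~\ref{lemm:1.1}'', and your combination of the dimension count from Lemma~\ref{lemm:1.1} with the spanning argument obtained by iteratively applying the rewriting relation \eqref{eq:2.3} is exactly one of the two intended routes. The termination argument via the lexicographic invariant $(j,e)$ is sound, so nothing is missing.
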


\section{Cohomology ring isomorphisms} \label{sect:4}

Let $\MB_n$ be the set of integral strictly upper triangular $n\times n$ matrices.  Since the Bott manifold $B_n$ in \eqref{eq:1.1} is determined by a matrix $A\in \MB_n$, we will denote $B_n$ by $M(A)$.
For the zero matrix $O\in \MB_n$, $M(O)=(\C P^1)^n$ and it is known that if $M(A)$ is diffeomorphic to $M(O)$, then $A=O$ (\cite{ma-pa08}).  However, it happens that $M(A)$ and $M(B)$ are diffeomorphic even if $A,B\in \MB_n$ are different.

Henceforth the cohomology elements $x_j, y_j, \alpha_j$ for $M(A)$ will be denoted by $\xa_j, \ya_j,\ala_j$ respectively to avoid confusion.

\begin{prop} \label{prop:3.1}
Suppose that $\psi\colon H^*(M(A);\Q)\to H^*(M(B);\Q)$ is a graded ring isomorphism. Then there are non-zero $\q_1,\dots,\q_n\in\Q$ and a permutation $\sigma$ on $[n]$ such that
\[
\psi(\ya_j)=\q_j\yb_{\sigma(j)}\quad \text{for $j=1,\dots,n$.}
\]
\end{prop}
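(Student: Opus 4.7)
The plan is to apply $\psi$ to the relation $\xa_j(\xa_j - \ala_j)=0$ and analyse the resulting vanishing pair $(\psi(\xa_j), \psi(\xa_j - \ala_j))$ in $H^*(M(B);\Q)$ via Lemma~\ref{lemm:1.3}. When $\ala_j\ne 0$ the two images are $\Q$-linearly independent; after rescaling to a primitive integral pair, Lemma~\ref{lemm:1.3} classifies the pair into one of two shapes. In the first shape, averaging gives $\psi(\ya_j)=\frac{1}{2}\psi(\xa_j+\xa_j-\ala_j)=\q_j \yb_{k_j}$, which is exactly the desired conclusion at index $j$; in the second, it is $\psi(\ala_j)$ rather than $\psi(\ya_j)$ that becomes a scalar multiple of some $\yb_{k_j}$, while $\psi(\ya_j)$ lies in the subring generated by the $\yb_\ell$ with $\ell<k_j$. (The degenerate case $\ala_j=0$ is easier: $\ya_j$ is then square zero, and Corollary~\ref{coro:1.1} directly forces $\psi(\ya_j)$ to be a scalar multiple of some $\yb_k$.)

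To rule out the second shape and, simultaneously, to locate $\psi(\ya_n)$ concretely, I would perform the following coefficient computation. Write $\psi(\ya_j)=\sum_k c^j_k \yb_k$ and $\psi(\ala_j/2)=\sum_k d^j_k \yb_k$, and expand the identity $(\psi(\ya_j))^2=(\psi(\ala_j/2))^2$ in the $\Q$-basis $\{\yb_{i_1}\yb_{i_2}:i_1<i_2\}$ of $H^4(M(B);\Q)$ supplied by Lemma~\ref{lemm:2.2}. Each $(\yb_k)^2=(\alb_k/2)^2$ lies in the subring generated by the $\yb_\ell$ with $\ell<k$, so its expansion in this basis contains no term of the form $\yb_i\yb_n$ with $i<n$. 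Equating the coefficients of $\yb_i\yb_n$ on the two sides therefore yields
\begin{equation*}
c^j_i c^j_n = d^j_i d^j_n\qquad (j\in[n],\ i<n).
\end{equation*}
Since $\ala_j/2=\sum_{i<j}a^i_j \ya_i$, one has $d^j_n=\sum_{i<j}a^i_j c^i_n$, which depends only on the $c^i_n$ for $i<j$. Let $j_1$ be the least index with $c^{j_1}_n\ne 0$ (it exists because $\yb_n$ must lie in the image of $\psi|_{H^2}$). Then $d^{j_1}_n=0$, and the above equations force $c^{j_1}_i=0$ for every $i<n$, giving $\psi(\ya_{j_1})=c^{j_1}_n \yb_n$. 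Applying the same analysis to $\psi^{-1}$ in place of $\psi$ yields symmetrically that $\psi(\ya_n)=\q_n \yb_{\sigma(n)}$ for some $\sigma(n)\in[n]$ and $\q_n\in\Q^\times$.

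It remains to propagate the identification $\psi(\ya_j)=\q_j \yb_{\sigma(j)}$ from $j=n$ to every $j\in[n]$. The main obstacle will be carrying out this induction on $n$ cleanly: the quotient $H^*(M(A);\Q)/(\ya_n)$ acquires the extra relation $(\ala_n/2)^2=0$ and so is not in general the cohomology ring of an $(n-1)$-stage Bott manifold, so one cannot naively pass to a quotient and invoke an inductive hypothesis. I would instead iterate the coefficient computation, projecting $(\psi(\ya_j))^2=(\psi(\ala_j/2))^2$ onto the basis elements $\yb_i\yb_m$ for the remaining values of $m$ in decreasing order and using the rows of $\psi|_{H^2}$ already determined to simplify the system at each stage. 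Once $\psi(\ya_j)=\q_j \yb_{k_j}$ is established for every $j$, bijectivity of $\psi|_{H^2}$ forces the indices $k_1,\dots,k_n$ to be distinct, so $\sigma:j\mapsto k_j$ is the required permutation.
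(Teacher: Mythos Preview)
Your coefficient computation at the top index is correct: comparing the $\yb_i\yb_n$-coefficients of $(\psi(\ya_j))^2$ and $(\psi(\ala_j/2))^2$ does give $c^j_ic^j_n=d^j_id^j_n$, and for the minimal $j_1$ with $c^{j_1}_n\ne 0$ this forces $\psi(\ya_{j_1})\in\Q\,\yb_n$; the $\psi^{-1}$ trick then legitimately produces $\psi(\ya_n)=q_n\yb_{\sigma(n)}$. The gap is the iteration. Once you descend to $m=n-1$, the clean identity $c^j_ic^j_m=d^j_id^j_m$ fails: on the $(\psi(\ala_j/2))^2$ side, the term $(d^j_n)^2(\yb_n)^2$ contributes $2(d^j_n)^2b^i_nb^{n-1}_n$ to the coefficient of $\yb_i\yb_{n-1}$, and $d^j_n=a^{j_1}_jq_{j_1}$ is typically nonzero for $j>j_1$ since $\ala_j/2$ may involve $\ya_{j_1}$. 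The resulting system $c^j_ic^j_{n-1}=d^j_id^j_{n-1}+(d^j_n)^2b^i_nb^{n-1}_n$ no longer forces the vanishing you need, and the obstruction compounds at each lower $m$. Your own remark that $H^*(M(A);\Q)/(\ya_n)$ is not a Bott-manifold cohomology is precisely the structural reason this descent on the \emph{target} side breaks. (The first paragraph on Lemma~\ref{lemm:1.3} is also a detour: it is never used in what follows.)

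The paper sidesteps this by inducting on the \emph{source} index $j$, which keeps the triangular structure of $A$ available. The base case $(\ya_1)^2=0$ gives $\psi(\ya_1)\in\Q\,\yb_{\sigma(1)}$ by Corollary~\ref{coro:1.1}. Assuming $\psi(\ya_i)=q_i\yb_{\sigma(i)}$ for $i<k$, one applies $\psi$ to $(\ya_i)^2=(\ala_i/2)^2$ to see that each $(\yb_{\sigma(i)})^2$ lies in the span of products $\yb_{\sigma(i_1)}\yb_{\sigma(i_2)}$ with $i_1<i_2<i$; in other words, the set $S=\{\sigma(1),\dots,\sigma(k-1)\}$ is \emph{closed under squaring}. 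Now write $\psi(\ya_k)=\sum_s b_s\yb_s$ and let $m$ be the largest $s\notin S$ with $b_s\ne 0$. Exactly your top-index argument applies with $m$ in place of $n$: since $(\yb_s)^2$ for $s\le m$ involves only indices $<m$ and $(\yb_s)^2$ for $s\in S$ involves only indices in $S$, any $b_t\ne 0$ with $t\ne m$ would leave a surviving $\yb_m\yb_t$ in the square, contradicting $\psi(\ya_k)^2=(\sum_{i<k}a^i_kq_i\yb_{\sigma(i)})^2$, which lies entirely in the $S$-span. Hence $\psi(\ya_k)=b_m\yb_m$. The missing idea in your proposal is this closure property of $S$, which is what makes the induction run without the correction terms that derail the target-side descent.
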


\begin{proof}
We prove the proposition by induction on $j$.  When $j=1$, $\psi(\ya_1)^2=0$ because $(\ya_1)^2=0$.  Therefore $\psi(\ya_1)$ is a nonzero scalar multiple of $\yb_{\sigma(1)}$ for some $\sigma(1)\in [n]$ by Corollary~\ref{coro:1.1}.

Suppose that
\begin{equation} \label{eq:3.1}
\psi(\ya_j)=\q_j\yb_{\sigma(j)}\quad \text{for $j<k$}.
\end{equation}
Then, since
\[
(\q_j\yb_{\sigma(j)})^2=\psi(\ya_j)^2=\psi\big((\ya_j)^2\big)
=\psi\big((\sum_{i<j}a^i_j\ya_i)^2\big)=\big(\sum_{i<j}a^i_j\q_i\yb_{\sigma(i)}\big)^2,
\]
one can inductively see that for $j<k$,
\begin{equation} \label{eq:3.2}
(\yb_{\sigma(j)})^2 \text{ is a linear combination of $\yb_{\sigma(i_1)}\yb_{\sigma(i_2)}$ with $i_1<i_2<j$.}
\end{equation}

Set $S=\{\sigma(1),\dots,\sigma(k-1)\}$.
Since $\psi$ is an isomorphism, it follows from \eqref{eq:3.1} that $\psi(\ya_k)$ is not a linear combination of $\yb_{\sigma(1)},\dots,\yb_{\sigma(k-1)}$.  Therefore, there is $m\in[n]\backslash S$ such that
\begin{equation} \label{eq:3.3}
\psi(\ya_k)=\sum_{s\le m\text{ or } s\in S}b_s\yb_s \quad(b_s\in \Q \text{ and }b_m\not=0).
\end{equation}

We consider squares at the both sides above.  It follows from \eqref{eq:2.3} and \eqref{eq:3.1} that
\[
\psi(\ya_k)^2=\psi\big((\ya_k)^2\big)=\psi\big((\sum_{i<k}a^i_k\ya_i)^2\big)=\big(\sum_{i<k}a^i_k\q_i\yb_{\sigma(i)}\big)^2
\]
and hence
\begin{equation} \label{eq:3.4}
\psi(\ya_k)^2 \text{ is a linear combination of  $\yb_{\sigma(i_1)}\yb_{\sigma(i_2)}$ with $i_1<i_2<k$}
\end{equation}
by \eqref{eq:3.2}. On the other hand, we claim that if $b_t\not=0$ for some $t<m$ or $t\in S$ at the right hand side of \eqref{eq:3.3}, then a non-zero scalar multiple of $\yb_m\yb_t$ appears in  the square of the right hand side of \eqref{eq:3.3}. Indeed, $(\yb_s)^2$ for $s\le m$ is a linear combination of $\yb_i\yb_j$ with $i<j<m$ by \eqref{eq:2.3} and $(\yb_s)^2$ for $s\in S$ is a linear combination of $\yb_i\yb_j$ with $i\not=j\in S$ by \eqref{eq:3.2}.  Therefore the claim holds because $\{\yb_i\yb_j\mid 1\le i<j\le n\}$ is an additive basis of $H^4(M(B);\Q)$ by Lemma~\ref{lemm:2.2}.  However the claim contradicts \eqref{eq:3.4} since $m\notin S$.  Therefore, $\psi(\ya_k)=b_m\yb_m$.  This completes the induction step and proves the proposition.
\end{proof}

\begin{lemm} \label{lemm:3.1}
Suppose that the graded ring isomorphism $\psi$ in Proposition~\ref{prop:3.1} is induced from a graded ring isomorphism from $H^*(M(A);\Z)$ to $H^*(M(B);\Z)$.  Then the rational number $\q_j$ in Proposition~\ref{prop:3.1} belongs to $\{\pm\frac{1}{2}, \pm 1,\pm 2\}$ for $j=1,2,\dots,n$, and
\begin{enumerate}
\item $\q_j\in \{\pm\frac{1}{2}\}$ if and only if $\ala_j\not\equiv 0 \pmod{2}$ and $\alb_{\sigma(j)}\equiv 0\pmod{2}$,
\item $\q_j\in \{\pm 2\}$ if and only if $\ala_j\equiv 0\pmod{2}$ and $\alb_{\sigma(j)}\not\equiv 0\pmod{2}$.
\end{enumerate}
Moreover, if $\q_j=\pm 1$ for all $j$, then $\psi(p(M(A)))=p(M(B))$.
\end{lemm}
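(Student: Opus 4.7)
The plan is to exploit the fact that $\psi$ is the rational extension of an integral graded ring isomorphism, so it sends primitive elements of $H^2(M(A);\Z)$ to primitive elements of $H^2(M(B);\Z)$. Proposition~\ref{prop:3.1} already gives $\psi(\ya_j)=\q_j\yb_{\sigma(j)}$ in rational cohomology, so the remaining task is to read off the admissible values of $\q_j$, together with the accompanying parity conditions, from the integrality and primitivity constraints on the $B$-side.

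First I would record the following dichotomy, which is essentially a restatement of Corollary~\ref{coro:1.1}. The primitive integral element of $H^2(M(A);\Z)$ supported at ``index $j$'' of the $(\ya_i)$-frame is $\ya_j$ itself when $\ala_j\equiv 0\pmod 2$ (for then $\ya_j=\xa_j-\tfrac12\ala_j$ is integral and has coefficient $1$ on $\xa_j$ in the basis of Lemma~\ref{lemm:1.1}), and is $2\ya_j=2\xa_j-\ala_j$ otherwise (some $A^i_j$ is then odd, so $2\ya_j$ is integral with coefficients of gcd $1$). The analogous description holds for $\yb_{\sigma(j)}$ in terms of the parity of $\alb_{\sigma(j)}$.

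Next I would apply $\psi$ to the appropriate primitive integral representative on the $A$-side, either $\ya_j$ or $2\ya_j$, and demand that its image, which equals $\q_j\yb_{\sigma(j)}$ or $2\q_j\yb_{\sigma(j)}$ rationally, again be a primitive integral element of $H^2(M(B);\Z)$. Expanding in the basis $\{\xb_i\}$ via \eqref{eq:1.2-1} and requiring integer coefficients with gcd~$1$ reduces to a short case analysis on the four combinations of parities of $\ala_j$ and $\alb_{\sigma(j)}$: matching parities force $\q_j=\pm 1$, the case $(\ala_j$ odd, $\alb_{\sigma(j)}$ even$)$ forces $\q_j=\pm\tfrac12$, and the case $(\ala_j$ even, $\alb_{\sigma(j)}$ odd$)$ forces $\q_j=\pm 2$. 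This yields simultaneously the global bound $\q_j\in\{\pm\tfrac12,\pm 1,\pm 2\}$ and the biconditionals (1) and (2).

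Finally, for the Pontrjagin assertion I would apply $\psi$ to the product formula \eqref{eq:2.1-1}, obtaining $\psi(p(M(A)))=\prod_{j=1}^n\bigl(1+4\q_j^2(\yb_{\sigma(j)})^2\bigr)$. Under the hypothesis $\q_j=\pm 1$ for every $j$, each factor $\q_j^2$ equals $1$, and since $\sigma$ is a permutation of $[n]$ the product reindexes to $\prod_{k=1}^n\bigl(1+4(\yb_k)^2\bigr)=p(M(B))$. I do not anticipate a substantial obstacle; the only real subtlety is keeping straight, in each of the four parity combinations, which of $\ya_j$ versus $2\ya_j$ (and likewise on the $B$-side) serves as the primitive integral representative, which is exactly what drives the four-way classification of $\q_j$.
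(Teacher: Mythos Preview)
Your argument is correct and is precisely the approach the paper takes, only spelled out in more detail: the paper's proof simply observes that $\psi(\xa_j-\tfrac12\ala_j)=\q_j(\xb_{\sigma(j)}-\tfrac12\alb_{\sigma(j)})$ and then invokes the fact that an integral isomorphism sends primitive elements to primitive elements, leaving the four-case parity analysis you wrote out implicit. One small quibble: the dichotomy you record is not literally a restatement of Corollary~\ref{coro:1.1} (which concerns square-zero primitives), but your direct verification of which of $\ya_j$ or $2\ya_j$ is the primitive integral representative is correct and is all that is needed.
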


\begin{proof}
Since $\psi(\xa_j-\frac{1}{2}\ala_j)=\q_j(\xb_{\sigma(j)}-\frac{1}{2}\alb_{\sigma(j)})$ by Proposition~\ref{prop:3.1}, the former statement in the lemma follows from the fact that $\psi$ restricted to the cohomology rings with integer coefficients
sends primitive elements to primitive elements and the latter follows from \eqref{eq:2.1-1} and Proposition~\ref{prop:3.1}.
\end{proof}

We fix a graded ring isomorphism $\psi\colon H^*(M(A);\Z)\to H^*(M(B);\Z)$ by the end of Lemma~\ref{lemm:3.2}. Since $(\psi(\xa_j),\psi(\xa_j-\ala_j))$ is a primitive vanishing pair, it follows from Lemma~\ref{lemm:1.3} that there are $a\in\Z\backslash \{0\}$, $u\in H^2(M(B);\Z)$ and $k\in [n]$ such that
\begin{equation} \label{eq:3.5}
(\psi(\xa_j),\psi(\xa_j-\ala_j))=\begin{cases}&(a\xb_k+u,a(\xb_k-\alb_k)-u)\quad \text{ or }\\
& (a\xb_k+u,-a(\xb_k-\alb_k)+u), \end{cases}
\end{equation}
where $u(u+a\alb_k)=0$ and $\height(u)<k$. Remember that $a^i_j$ is the $(i,j)$ entry of the matrix $(E-\frac{1}{2}A)^{-1}$, in other words, $a^i_j$ is the rational number determined by \eqref{eq:2.2}.

With this understood, we have

\begin{lemm} \label{lemm:3.1-1}
In the former case of \eqref{eq:3.5}, $\q_j=a$ (hence $\q_j$ is an integer), $k=\sigma(j)$ and $a^i_j=0$ for $\sigma(i)>\sigma(j)$. In the latter case of \eqref{eq:3.5}, $\ala_j=2a\q_i^{-1}\ya_i$ for some $i<j$.
\end{lemm}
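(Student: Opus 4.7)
The plan is to extract $\psi(\ala_j)$ from the identity $\ala_j = \xa_j - (\xa_j - \ala_j)$ in each of the two cases of \eqref{eq:3.5}, then recover $\psi(\ya_j)$ via $\ya_j = \xa_j - \frac{1}{2}\ala_j$, and finally compare the resulting expressions with Proposition~\ref{prop:3.1} after expanding everything in the basis $\{\yb_1,\dots,\yb_n\}$ of $H^2(M(B);\Q)$ furnished by Lemma~\ref{lemm:2.2}.

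In the former case, a direct subtraction gives $\psi(\ala_j) = a\alb_k + 2u$, so the $u$ terms cancel in $\psi(\ya_j) = \psi(\xa_j) - \frac{1}{2}\psi(\ala_j)$ to yield $\psi(\ya_j) = a(\xb_k - \frac{1}{2}\alb_k) = a\yb_k$.  Comparing with $\psi(\ya_j) = \q_j\yb_{\sigma(j)}$ and using linear independence of the $\yb_\ell$'s over $\Q$ immediately forces $\q_j = a$ and $k = \sigma(j)$; in particular $\q_j \in \Z$.  To obtain $a^i_j = 0$ whenever $\sigma(i) > \sigma(j)$, I would expand $\psi(\xa_j)$ in the $\yb$-basis two ways: from $\xa_j = \ya_j + \sum_{i<j} a^i_j\ya_i$ together with Proposition~\ref{prop:3.1} one gets $\psi(\xa_j) = \q_j\yb_{\sigma(j)} + \sum_{i<j} a^i_j\q_i\yb_{\sigma(i)}$, while from $\psi(\xa_j) = a\xb_{\sigma(j)} + u$ with $\height(u) < \sigma(j)$ one gets an expression involving only $\yb_\ell$ with $\ell \le \sigma(j)$.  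Equating coefficients of $\yb_{\sigma(i)}$ for those $i<j$ with $\sigma(i) > \sigma(j)$ forces $a^i_j\q_i = 0$, and nonvanishing of $\q_i$ yields $a^i_j = 0$.

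In the latter case, the same subtraction yields the cleaner $\psi(\ala_j) = 2a\xb_k - a\alb_k = 2a\yb_k$.  On the other hand, \eqref{eq:2.2} gives $\ala_j = 2\sum_{i<j} a^i_j\ya_i$, so applying $\psi$ and Proposition~\ref{prop:3.1} produces the identity $\sum_{i<j} a^i_j\q_i\yb_{\sigma(i)} = a\yb_k$ in $H^2(M(B);\Q)$.  Linear independence of $\{\yb_1,\dots,\yb_n\}$ then pins down a unique index $i = \sigma^{-1}(k) < j$ satisfying $a^i_j\q_i = a$ (existence comes from the nonzero $\yb_k$-coefficient on the right; the inequality $i < j$ follows from the parallel computation $\psi(\ya_j) = \frac{a}{2}\alb_k + u$, which shows $\sigma(j) \ne k$), while $a^{i'}_j = 0$ for all other $i' < j$.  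Substituting back into $\ala_j = 2\sum_{i'<j} a^{i'}_j\ya_{i'}$ gives $\ala_j = 2a^i_j\ya_i = 2a\q_i^{-1}\ya_i$, as claimed.

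The main bookkeeping nuisance is the translation between the $\xb$- and $\yb$-descriptions, since each $\xb_\ell$ expresses as a $\Q$-linear combination of $\yb_t$'s with $t \le \ell$ (with rational coefficients in general); this forces the entire comparison to be carried out in $H^*(M(B);\Q)$, which is legitimate by Lemma~\ref{lemm:2.2}.  Once this is kept straight, both cases reduce to reading off coefficients against a fixed basis.
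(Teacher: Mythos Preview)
Your proof is correct and follows essentially the same approach as the paper's: compute $\psi(2\ya_j)$ and $\psi(\ala_j)$ from the two cases of \eqref{eq:3.5}, then compare with Proposition~\ref{prop:3.1} and invoke the height constraints. The paper's argument for $i<j$ in the latter case is slightly more direct (it reads $i=\height(\ala_j)<j$ immediately from $\ala_j=2a\q_i^{-1}\ya_i$), whereas your parenthetical about $\sigma(j)\ne k$ only yields $i\ne j$; but your ``existence'' clause already forces $i<j$ since the sum in \eqref{eq:2.2} ranges over $i<j$, so the argument is complete.
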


\begin{proof}
Since $2\ya_j=\xa_j+(\xa_j-\ala_j)$ and $\ala_j=\xa_j-(\xa_j-\ala_j)$, a simple computation using \eqref{eq:3.5} shows that
\begin{equation} \label{eq:3.5-0}
(\psi(2\ya_j), \psi(\ala_j))=\begin{cases} (2a\yb_k,2u+a\alb_k)\quad &\text{in the former case of \eqref{eq:3.5},}\\
(2u+a\alb_k,2a\yb_k)\quad &\text{in the latter case of \eqref{eq:3.5}.}\end{cases}
\end{equation}

In the former case of \eqref{eq:3.5}, we have $\psi(\ya_j)=a\yb_k$ by \eqref{eq:3.5-0}.  This together with Proposition~\ref{prop:3.1} shows that $\q_j=a$ and $k=\sigma(j)$.  Moreover, since $\psi(\ala_j)=2u+a\alb_k$ by \eqref{eq:3.5-0} and $\height(u)<k$,  we have
\begin{equation*} \label{eq:3.5-1}
\height(\psi(\ala_j))<k=\sigma(j)
\end{equation*}
while since $\ala_j=2\sum_{i<j}a^i_j\ya_i$ by \eqref{eq:2.2}, we have
$$\psi(\ala_j)=2\sum_{i<j}a^i_j\q_i\yb_{\sigma(i)}$$
by Proposition~\ref{prop:3.1}.  These show that $a^i_j=0$ for $\sigma(i)>\sigma(j)$.

In the latter case of \eqref{eq:3.5}, $\psi(\ala_j)=2a\yb_k$ by \eqref{eq:3.5-0}.  Therefore $\ala_j=2a\q_i^{-1}\ya_i$ for some $i\in [n]$ by Proposition~\ref{prop:3.1} and $j>\height(\ala_j)=\height(\ya_i)=i$.
\end{proof}

Note that $2a\q_i^{-1}$ in Lemma~\ref{lemm:3.1-1} is a nonzero integer because $a\in \Z\backslash\{0\}$ and $\q_i\in \{\pm \frac{1}{2}, \pm 1, \pm 2\}$ by Lemma~\ref{lemm:3.1}.  We say that $\alpha_j$ is of \emph{exceptional type} if $\alpha_j=c y_i$ for some nonzero integer $c$ and $i<j$ and is of \emph{even exceptional type} if the nonzero integer $c$ is even.

\begin{lemm} \label{lemm:3.2}
Suppose that $\q_j=\pm \frac{1}{2}$.  Then there are $i<j$ and $c\in\Z\backslash\{0\}$ such that $\ala_j=c\ya_i$. If $\ala_j$ is not of even exceptional type, then $c$ is odd and $\q_i=\pm 2$. If neither $\ala_j$ nor $\alb_{\sigma(i)}$ are of even exceptional type, then $\alb_{\sigma(i)}=d\yb_{\sigma(j)}$ with some odd integer $d$.
\end{lemm}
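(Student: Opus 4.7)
The plan is to prove the three assertions in order, with the third obtained by applying the first two to the inverse isomorphism $\psi^{-1}$ and then pinning down the index by a mod-$2$ argument.

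For the first assertion, since $\q_j=\pm\frac{1}{2}$ is not an integer, the former case of \eqref{eq:3.5} (where Lemma~\ref{lemm:3.1-1} would give $\q_j=a\in\Z$) cannot occur. Hence the latter case holds and Lemma~\ref{lemm:3.1-1} provides some $i<j$ with $\ala_j=2a\q_i^{-1}\ya_i$; setting $c=2a\q_i^{-1}\in\Z\setminus\{0\}$ yields $\ala_j=c\ya_i$. For the second assertion, assuming $\ala_j$ is not of even exceptional type forces $c$ to be odd. Since $\q_i\in\{\pm\frac{1}{2},\pm 1,\pm 2\}$ by Lemma~\ref{lemm:3.1}, the value $2a\q_i^{-1}$ is odd only when $\q_i^{-1}=\pm\frac{1}{2}$, i.e.\ $\q_i=\pm 2$, in which case $c=\pm a$ forces $a$ to be odd as well.

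For the third assertion, the identity $\psi^{-1}(\yb_{\sigma(i)})=\q_i^{-1}\ya_i=\pm\frac{1}{2}\ya_i$ places the inverse map $\psi^{-1}\colon H^*(M(B);\Z)\to H^*(M(A);\Z)$ into the hypothesis of the first assertion at index $\sigma(i)$. Applying the already-proved first and second assertions to $\psi^{-1}$, together with the hypothesis that $\alb_{\sigma(i)}$ is not of even exceptional type, produces an index $i''<\sigma(i)$ and an odd integer $c''$ with $\alb_{\sigma(i)}=c''\yb_{i''}$. Since $\alb_{\sigma(i)}$ is integral and $c''$ is odd, $\yb_{i''}$ must itself be integral, forcing $\alb_{i''}$ to be even.

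It remains to identify $i''=\sigma(j)$. Using the latter case of \eqref{eq:3.5} with $k=\sigma(i)$, compute
\[
\psi(2\ya_j)=\psi(\xa_j)+\psi(\xa_j-\ala_j)=2u+a\alb_{\sigma(i)},
\]
while on the other hand $\psi(2\ya_j)=2\q_j\yb_{\sigma(j)}=\pm\yb_{\sigma(j)}$ since $\q_j=\pm\frac{1}{2}$. Substituting $\alb_{\sigma(i)}=c''\yb_{i''}$ and reducing modulo $2$, using that both $a$ and $c''$ are odd, gives $\yb_{\sigma(j)}\equiv\yb_{i''}\pmod 2$. Since $\alb_{\sigma(j)}$ is even by Lemma~\ref{lemm:3.1}(1) and $\alb_{i''}$ is even as above, Lemma~\ref{lemm:2.1} forces $\sigma(j)=i''$, whence $\alb_{\sigma(i)}=c''\yb_{\sigma(j)}$ with $c''$ odd, proving the assertion with $d=c''$. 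The main obstacle is precisely this last index identification: over $\Q$ the relation only determines the direction $\yb_{i''}$ up to scalar among many possibilities, and it is the integrality of $u$ together with the $\Z/2$-rigidity of Lemma~\ref{lemm:2.1} that pins the index down to $\sigma(j)$.
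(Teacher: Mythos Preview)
Your proof is correct and follows essentially the same route as the paper: invoke Lemma~\ref{lemm:3.1-1} for the first two assertions, apply the same to $\psi^{-1}$ for the third, and pin down the index via a mod-$2$ congruence and Lemma~\ref{lemm:2.1}. The only point you assert without justification is $k=\sigma(i)$ in \eqref{eq:3.5}; this is implicit in the proof of Lemma~\ref{lemm:3.1-1} (since there $i$ is obtained from $\psi(\ala_j)=2a\yb_k$ via Proposition~\ref{prop:3.1}, so $i=\sigma^{-1}(k)$), but it would be worth one line. The paper's mod-$2$ chain \eqref{eq:3.10}--\eqref{eq:3.11} avoids naming $k$ explicitly, whereas you substitute directly into $\psi(2\ya_j)=2u+a\alb_k$; the two computations are equivalent.
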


\begin{proof}
Since $\q_j=\pm\frac{1}{2}$, that is not an integer, by assumption, Lemma~\ref{lemm:3.1-1} says that the latter case of \eqref{eq:3.5} must occur and $\ala_j=2a\q_i^{-1}\ya_i$.  Setting $c=2a\q_i^{-1}$, we have
\begin{equation} \label{eq:3.8}
\ala_j=c\ya_i,
\end{equation}
where $c$ is a nonzero integer.  If $\ala_j$ is not of even exceptional type, then $c=2a\q_i^{-1}$ is odd and hence $\q_i$ must be $\pm 2$ (and $a$ is odd).

It remains to prove the last assertion in the lemma.  Suppose that neither $\ala_j$ nor $\alb_{\sigma(i)}$ are of even exceptional type.  Then $\q_i=\pm 2$ as observed above, and since $\psi(\ya_i)=\q_i\yb_{\sigma(i)}$ by Proposition~\ref{prop:3.1}, we have $\psi^{-1}(\yb_{\sigma(i)})=\pm\frac{1}{2}\ya_i$. Then the same argument as above applied for $\psi^{-1}(\yb_{\sigma(i)})=\pm\frac{1}{2}\ya_i$ tells us that there are $\ell\in [n]$ and $d\in\Z\backslash\{0\}$ such that
\begin{equation} \label{eq:3.9}
\alb_{\sigma(i)}=d\yb_{\ell}.
\end{equation}
Here $d$ is odd because $\alb_{\sigma(i)}$ is not of even exceptional type by assumption.
In the sequel, it suffices to prove $\ell=\sigma(j)$.

Since $d$ is odd, it follows from \eqref{eq:3.9} that $\alb_{\ell}\equiv 0 \pmod{2}$ while $\alb_{\sigma(j)}\equiv 0 \pmod 2$ by Lemma~\ref{lemm:3.1} since $\q_j=\pm \frac{1}{2}$. Therefore, it suffices to show $\yb_{\ell}\equiv \yb_{\sigma(j)}\pmod 2$ by Lemma~\ref{lemm:2.1}, which we shall check below.
Since $\pm\yb_{\sigma(j)}=\psi(2\ya_j)=\psi(2\xa_j-\ala_j)=2\psi(\xa_j)-\psi(\ala_j)$, we have
\begin{equation} \label{eq:3.10}
\yb_{\sigma(j)}\equiv \psi(\ala_j)\pmod 2.
\end{equation}
Similarly, since $\pm \psi(\ya_i)=2\yb_{\sigma(i)}=2\xb_{\sigma(i)}-\alb_{\sigma(i)}$, we have
\begin{equation} \label{eq:3.11}
\psi(\ya_i)\equiv \alb_{\sigma(i)}\pmod 2.
\end{equation}
Thus, since $c$ and $d$ are both odd integers, it follows from \eqref{eq:3.10}, \eqref{eq:3.8}, \eqref{eq:3.11} and \eqref{eq:3.9} that
\[
\yb_{\sigma(j)}\equiv \psi(\ala_j)\equiv \psi(\ya_i)\equiv\alb_{\sigma(i)}\equiv \yb_{\ell}\pmod 2,
\]
proving the desired congruence relation.
\end{proof}

\begin{lemm} \label{lemm:4.1}
For $A\in\MB_n$, there is $B\in \MB_n$ such that none of $\alb_1,\dots,\alb_n$ are of even exceptional type and there is a graded ring isomorphism $\psi\colon H^*(M(A);\Z)\to H^*(M(B);\Z)$ such that $\psi(p(M(A)))=p(M(B))$. 
\end{lemm}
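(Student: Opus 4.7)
The plan is to iteratively apply a base-change transformation to the matrix $A$, each step producing a new Bott matrix $B \in \MB_n$ together with a graded ring isomorphism whose $\q_l$ are all equal to $1$ (so Pontrjagin classes are preserved by Lemma~\ref{lemm:3.1}), until no $\alb_j$ is of even exceptional type. Termination will be enforced by the measure $\Phi(A) := \sum i_j$, where the sum runs over those $j \in [n]$ for which $\ala_j = c_j \ya_{i_j}$ is of even exceptional type; I shall show that a single base-change step strictly decreases $\Phi$.

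The base-change step goes as follows. Suppose $\ala_j = 2k\,\ya_i$ for some $k \in \Z \setminus \{0\}$ and $i < j$. Since $2k\,\ya_i = 2k\,\xa_i - k\,\ala_i$ is integral, so is $\tilde x_j := \xa_j - k\,\xa_i$. A direct expansion of $\tilde x_j^{\,2}$ using $\xa_j^{\,2} = \ala_j \xa_j$ and $\xa_i^{\,2} = \ala_i \xa_i$ yields $\tilde x_j^{\,2} = -k\,\ala_i\,\tilde x_j$. Setting $\xb_l := \xa_l$ for $l \neq j$ and $\xb_j := \tilde x_j$, the resulting generators realise $H^*(M(A);\Z)$ as the cohomology ring of a new Bott manifold $M(B)$, where $\alb_j = -k\,\ala_i$ and, for $l \neq j$, $\alb_l$ is $\ala_l$ rewritten in the new basis (unchanged for $l < j$; for $l > j$ only the coefficient of $\xb_i$ is altered by $k$ times the coefficient of $\xa_j$ in $\ala_l$). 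The assignment $\psi(\xa_l) = \xb_l$ ($l \neq j$), $\psi(\xa_j) = \xb_j + k\,\xb_i$ is a well-defined graded ring isomorphism, and a short computation verifies $\psi(\ya_l) = \yb_l$ for every $l$; the only non-trivial case is $l = j$, where
\[
\psi(\ya_j) = \xb_j + k\,\xb_i - \tfrac{1}{2}(2k\,\xb_i - k\,\alb_i) = \xb_j + \tfrac{k}{2}\alb_i = \yb_j,
\]
using $\alb_j = -k\,\alb_i$ and $\alb_i = \ala_i$. Thus all $\q_l = 1$ and Lemma~\ref{lemm:3.1} gives $\psi(p(M(A))) = p(M(B))$.

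Because $\psi$ sends $\ya_l$ exactly to $\yb_l$, any relation $\ala_l = c\,\ya_m$ transports to $\alb_l = c\,\yb_m$; consequently the exceptional-type data (status, coefficient and target) at every index $l \neq j$ is preserved intact. At position $j$ itself we have $\alb_j = -k\,\ala_i$: if $\ala_i$ is not of exceptional type, neither is $\alb_j$, and the summand $i_j$ leaves $\Phi$; if $\ala_i = c\,\ya_m$ with $m < i$, then $\alb_j = -kc\,\yb_m$, which is either of odd exceptional type (summand leaves $\Phi$) or of even exceptional type with strictly smaller target $m < i$. In every scenario $\Phi$ strictly decreases, so finitely many iterations produce a $B \in \MB_n$ with no $\alb_l$ of even exceptional type, and the composition of the individual base-change isomorphisms is the required $\psi$. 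The main technical steps I expect to handle with care are the verifications that each new matrix truly lies in $\MB_n$ (strict upper triangularity and integrality of the altered entries, which rely on $\tfrac{1}{2}\ala_j$ being integral, inherited from the even-exceptional assumption) and the exact — not merely up-to-sign — identity $\psi(\ya_l) = \yb_l$, since this is what keeps $\Phi$ controlled across iterations and ensures Pontrjagin classes are preserved after composition.
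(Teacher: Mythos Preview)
Your proposal is correct and follows essentially the same approach as the paper. The base-change step you describe (with $c=2k$) is identical to the paper's, and your verification that $\psi(\ya_l)=\yb_l$ for all $l$ is exactly what drives both arguments. The only cosmetic difference is in the termination bookkeeping: the paper organises it as a double induction (first on the least $j$ with $\ala_j$ of even exceptional type, then on $\height(\ala_j)$), whereas you package the same information into the single scalar measure $\Phi(A)=\sum i_j$; your observation that the exceptional-type data at every $l\neq j$ is preserved verbatim (because $\alb_l=\ala_l$ and $\yb_m=\ya_m$ as elements) makes this cleaner than the paper's somewhat implicit handling of the indices $k<j$.
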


\begin{proof}
Suppose that $\ala_j$ is of even exceptional type but $\ala_k$ for $k<j$ is not.  In the following we will find $B\in\MB_n$ such that $\alb_k$ for $k< j$ is not of even exceptional type but $\height(\alb_j)<\height(\ala_j)$ and that $H^*(M(A);\Z)$ and $H^*(M(B);\Z)$ are isomorphic as graded rings.  If $\alb_j$ is still of even exceptional type, then we repeat the argument until we reach $B$ such that $\alb_k$ for $k\le j$ is not of even exceptional type. Note that this can be achieved because if $\height(\alb_j)=0$, then $\alb_j=0$ which is not of even exceptional type.
Doing this procedure inductively on $j$, we finally reach the desired $B$ in the lemma.

Since $\ala_j$ is of even exceptional type by assumption, we have
\begin{equation} \label{eq:4.0-0}
\ala_j=c(\xa_i-\frac{1}{2}\ala_i)
\end{equation}
with some nonzero even integer $c$ and $i<j$.  We define a matrix $B$ of size $n$ by
\begin{equation*}
B_k^\ell=\begin{cases} A_k^\ell \quad &\text{if $k\not=j$ and $\ell\not=i$},\\
A^i_k+\frac{c}{2}A^j_k \quad &\text{if $k\not=j$ and $\ell=i$},\\
-\frac{c}{2}A_i^\ell\quad&\text{if $k=j$}.
\end{cases}
\end{equation*}
Since $A\in \MB_n$ and $i<j$, the matrix $B$ is indeed in $\MB_n$ (i.e., $B_k^\ell=0$ for $\ell\ge k$), and
\begin{equation} \label{eq:4.0-1}
\alb_k=\begin{cases} \sum_{\ell=1}^nA_k^\ell \xb_\ell+\frac{c}{2}A^j_k\xb_i \quad&\text{if $k\not=j$},\\
-\frac{c}{2}\sum_{\ell=1}^n A_i^\ell \xb_\ell \quad&\text{if $k=j$.}
\end{cases}
\end{equation}
Note that since $i<j$, $A^j_i=0$ and hence it follows from \eqref{eq:4.0-1} that
\begin{equation} \label{eq:4.0-1-1}
\alb_j=-\frac{c}{2}\alb_i.
\end{equation}

We define
\begin{equation} \label{eq:4.0-2}
\psi(\xa_\ell)=\begin{cases} \xb_\ell\quad&\text{if $\ell\not=j$},\\
\xb_j+\frac{c}{2}\xb_i\quad&\text{if $\ell=j$}.
\end{cases}
\end{equation}
This clearly induces an isomorphism
$\psi\colon \Z[\xa_1,\dots,\xa_n]\to \Z[\xb_1,\dots,\xb_n]$
between polynomial rings.
We claim that $\psi$ induces a graded isomorphism from $H^*(M(A);\Z)$ to $H^*(M(B);\Z)$.  Indeed, when $k\not=j$, it follows from \eqref{eq:4.0-2} and \eqref{eq:4.0-1} that
\begin{equation} \label{eq:4.0-3}
\psi(\ala_k)=\psi(\sum_{\ell=1}^nA_k^\ell\xa_\ell)=\sum_{\ell=1}^nA_k^\ell\xb_\ell+\frac{c}{2}A^j_k\xb_i=\alb_k
\end{equation}
and hence
\begin{equation} \label{eq:4.0-4}
\psi\big(\xa_k(\xa_k-\ala_k)\big)=\xb_k(\xb_k-\alb_k)\quad \text{when $k\not=j$.}
\end{equation}
When $k=j$, we have
\begin{equation}\label{eq:4.0-5}
\begin{split}
\psi\big(\xa_j(\xa_j-\ala_j)\big)&=\psi\big(\xa_j(\xa_j-c(\xa_i-\frac{1}{2}\ala_i))\big)\quad \hspace{26pt}\text{by \eqref{eq:4.0-0}}\\
&=(\xb_j+\frac{c}{2}\xb_i)(\xb_j-\frac{c}{2}\xb_i+\frac{c}{2}\alb_i)\quad\text{by \eqref{eq:4.0-2} and \eqref{eq:4.0-3}}\\
&=\xb_j(\xb_j+\frac{c}{2}\alb_i)-\frac{c^2}{4}\xb_i(\xb_i-\alb_i)\\
&=\xb_j(\xb_j-\alb_j)-\frac{c^2}{4}\xb_i(\xb_i-\alb_i)\quad\hspace{2pt}\text{by \eqref{eq:4.0-1-1}}.
\end{split}
\end{equation}
\eqref{eq:4.0-4} and \eqref{eq:4.0-5} together with \eqref{eq:1.2} show that $\psi$ induces a graded ring isomorphism from $H^*(M(A);\Z)$ to $H^*(M(B);\Z)$.

Since $\height(\ala_j)=i$ by \eqref{eq:4.0-0} and $\height(\alb_j)=\height(\alb_i)<i$ by \eqref{eq:4.0-1-1}, we have $\height(\alb_j)<\height(\ala_j)$.
Moreover, the isomorphism $\psi$ defined in \eqref{eq:4.0-2}
is represented as a unipotent upper triangular matrix with respect to the basis $\xa_1,\dots,\xa_n$ of $H^2(M(A);\Z)$ and $\xb_1,\dots,\xb_n$ of $H^2(M(B);\Z)$, so $\q_j=1$ for any $j$ and hence $\psi(p(M(A)))=p(M(B))$ by Lemma~\ref{lemm:3.1}.
\end{proof}

\begin{rema} \label{rema:4.1}
The graded ring isomorphism $\psi$ in the proof above is actually induced from a diffeomorphism, which follows from Theorem~\ref{theo:ishi} mentioned in Section~\ref{sect:6}.  One can also see it using the quotient construction of Bott manifolds.
\end{rema}

\section{Proof of Theorem~\ref{theo:0.1}} \label{sect:5}

This section is devoted to the proof of Theorem~\ref{theo:0.1} in the Introduction.

Let $\psi\colon H^*(M(A);\Z)\to H^*(M(B);\Z)$ be a graded ring isomorphism. By \eqref{eq:2.1-1} what we must prove is
\[
\psi\big(\prod_{j=1}^n(1+4(\ya_j)^2)\big)=\prod_{j=1}^n(1+4(\yb_j)^2)\in H^*(M(B);\Z).
\]
We may assume that none of $\ala_1,\dots,\ala_n$ and $\alb_1,\dots,\alb_n$ are of even exceptional type by Lemma~\ref{lemm:4.1}. Since $\psi(\ya_j)=\q_j\yb_{\sigma(j)}$ by Proposition~\ref{prop:3.1}, $\psi(1+4(\ya_j)^2)=1+4(\yb_{\sigma(j)})^2$ if $\q_j=\pm 1$.  Therefore, we shall treat the case where $\q_j=\pm\frac{1}{2}$ or $\pm 2$ by Lemma~\ref{lemm:3.1}.

Suppose $\q_j=\pm \frac{1}{2}$.  Then there is $i<j$ such that
\begin{equation} \label{eq:4.3}
\ala_{j}=c\ya_i \quad\text{and}\quad \alb_{\sigma(i)}=d\yb_{\sigma(j)}
\end{equation}\label{eq:4.4}
 with some odd integers $c$, $d$ and $\q_i=\pm 2$ by Lemma~\ref{lemm:3.2}. We shall show that
 \begin{equation} \label{eq:4.5}
 \psi\big((1+4(\ya_j)^2)(1+4(\ya_i)^2)\big)=(1+4(\yb_{\sigma(j)})^2)(1+4(\yb_{\sigma(i)})^2).
 \end{equation}
It follows from \eqref{eq:2.1-0} and \eqref{eq:4.3} that
\begin{equation} \label{eq:4.5-1}
4(\ya_j)^2=(\ala_j)^2=c^2(\ya_i)^2
\end{equation}
and sending the first and last elements in the identity above by $\psi$, we obtain
\begin{equation} \label{eq:4.6}
 (\yb_{\sigma(j)})^2=4c^2(\yb_{\sigma(i)})^2
 \end{equation}
 since $\q_j=\pm\frac{1}{2}$ and $q_i=\pm 2$.  Here $4(\yb_{\sigma(i)})^2=(\alb_{\sigma(i)})^2$ by \eqref{eq:2.1-0} and $(\alb_{\sigma(i)})^2=d^2(\yb_{\sigma(j)})^2$ by \eqref{eq:4.3}.  Therefore, \eqref{eq:4.6} turns into
\begin{equation} \label{eq:4.7}
(\yb_{\sigma(j)})^2=c^2d^2(\yb_{\sigma(j)})^2.
\end{equation}

When $(\yb_{\sigma(i)})^2\not=0$, \eqref{eq:4.7} implies $c^2d^2=1$ and hence $c^2=d^2=1$ because $c,d$ are integers.  It follows from \eqref{eq:4.5-1} and \eqref{eq:4.6} that
\[
4(\ya_j)^2=(\ya_i)^2\quad\text{and}\quad  (\yb_{\sigma(j)})^2=4(\yb_{\sigma(i)})^2.
\]
Therefore
\[
\begin{split}
\text{the left hand side of \eqref{eq:4.5} }&=\psi\big((1+(\ya_i)^2)(1+16(\ya_j)^2\big)\\
&=(1+\q_i^2(\yb_{\sigma(i)})^2)(1+16\q_j^2(\yb_{\sigma(j)})^2)\\
&=\text{the right hand side of \eqref{eq:4.5}}
\end{split}
\]
because $\q_j=\pm\frac{1}{2}$ and $\q_i=\pm 2$.  When $(\yb_{\sigma(i)})^2=0$, we have $(\yb_{\sigma(j)})^2=0$ by \eqref{eq:4.6} and $(\ya_j)^2=(\ya_i)^2=0$ because $\psi(\ya_j)=\yb_{\sigma(j)}$ and $\psi(\ya_i)=\yb_{\sigma(i)}$ and $\psi$ is an isomorphism.   Therefore, \eqref{eq:4.5} holds even when $(\yb_{\sigma(i)})^2=0$.

When $\q_j=\pm 2$, $\psi^{-1}(\yb_{\sigma(j)})=\pm\frac{1}{2}\ya_j$.  Therefore, the same argument as above applied for $\psi^{-1}$ shows that \eqref{eq:4.5} also holds when $\q_j=\pm 2$.  This completes the proof of the theorem.

\section{Proof of Theorem~\ref{theo:0.2}} \label{sect:6}

The purpose of this section is to prove Theorem~\ref{theo:0.2} in the Introduction.  We begin with the following lemma.

\begin{lemm} \label{lemm:6.1}
Let $A\in \MB_n$, $\sigma$ a permutation on $[n]$ and let $P$ be the permutation matrix of $\sigma^{-1}$, that is, the $(i,j)$ entry of $P$ is 1 if $i=\sigma(j)$ and 0 otherwise.  If $PAP^{-1}\in \MB_n$, then there is
 a graded ring isomorphism $\psi_{\sigma}\colon H^*(M(A);\Z)\to H^*(M(PAP^{-1});\Z)$ sending $\xa_{j}$ to $\xp_{\sigma(j)}$ for $j=1,2,\dots,n$ and
 it is induced from a diffeomorphism.
\end{lemm}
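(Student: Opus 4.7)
The plan is to produce the cohomology isomorphism algebraically and then lift it to a diffeomorphism via the quotient construction \eqref{eq:1-a}. Set $A'=PAP^{-1}$; a direct matrix computation gives $(A')^i_j=A^{\sigma^{-1}(i)}_{\sigma^{-1}(j)}$. Define $\psi_\sigma$ on polynomial rings by $\psi_\sigma(\xa_j)=\xp_{\sigma(j)}$. Then $\psi_\sigma(\ala_j)=\sum_{i<j}A^i_j\xp_{\sigma(i)}$, while the $\alpha$-class for $M(A')$ at index $\sigma(j)$ expands as $\sum_k(A')^k_{\sigma(j)}\xp_k=\sum_m A^m_j\xp_{\sigma(m)}$ after the reindexing $k=\sigma(m)$. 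These two expressions coincide, so $\psi_\sigma((\xa_j)^2-\ala_j\xa_j)=0$ in $H^*(M(A');\Z)$, and $\psi_\sigma$ descends to the desired graded ring isomorphism.

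For the diffeomorphism, I would define $\tilde\phi\colon (S^3)^n\to(S^3)^n$ by the coordinate permutation $(s_1,\dots,s_n)\mapsto(s_{\sigma^{-1}(1)},\dots,s_{\sigma^{-1}(n)})$ and $\Phi\colon(S^1)^n\to(S^1)^n$ by $\Phi(g)_k=g_{\sigma^{-1}(k)}$. The crucial step is the equivariance $\tilde\phi(g\cdot s)=\Phi(g)\cdot\tilde\phi(s)$, with the left action given by $A$ and the right by $A'$ as in \eqref{eq:1-a}. Comparing $j$-th coordinates, the $z$- and $w$-entries match automatically, and the assertion reduces, after unwinding $(A')^{\sigma(m)}_j=A^m_{\sigma^{-1}(j)}$, to the product identity
\[
\prod_{m<\sigma^{-1}(j)}g_m^{-A^m_{\sigma^{-1}(j)}}=\prod_{m\,:\,\sigma(m)<j}g_m^{-A^m_{\sigma^{-1}(j)}}.
\]

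This identity is the main obstacle and the only place where the hypothesis $PAP^{-1}\in\MB_n$ is used. Strict upper-triangularity of $A$ itself gives $A^m_{\sigma^{-1}(j)}=0$ for $m\ge\sigma^{-1}(j)$, while $PAP^{-1}\in\MB_n$ translates, via $(A')^{\sigma(m)}_j=A^m_{\sigma^{-1}(j)}$, into $A^m_{\sigma^{-1}(j)}=0$ whenever $\sigma(m)\ge j$; hence both products have the same nonzero exponents and agree. Thus $\tilde\phi$ descends to a diffeomorphism $\phi\colon M(A)\to M(PAP^{-1})$. Finally, to see that $\phi$ induces $\psi_\sigma$, I would use the description at the end of Section~\ref{sect:2} of the tautological line bundle $\gamma_j$ as the $(S^1)^n$-equivariant line bundle with character $g\mapsto g_j^{-1}$ on the fibre. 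By the equivariance via $\Phi$, the pullback $\phi^*\gamma^{PAP^{-1}}_{\sigma(j)}$ carries the character $g\mapsto\Phi(g)_{\sigma(j)}^{-1}=g_j^{-1}$, namely $\gamma_j$ on $M(A)$. Therefore $\phi^*(\xp_{\sigma(j)})=\xa_j$, so $\psi_\sigma$ is induced by $\phi$.
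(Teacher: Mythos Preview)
Your proof is correct and follows essentially the same approach as the paper: both construct the diffeomorphism via a coordinate permutation on $(S^3)^n$, verify equivariance by the key observation that the strict upper-triangularity of \emph{both} $A$ and $PAP^{-1}$ forces the two index conditions $m<\sigma^{-1}(j)$ and $\sigma(m)<j$ to cut out the same nonzero exponents, and then identify the induced cohomology map using the line-bundle description at the end of Section~\ref{sect:2}. The only cosmetic differences are that your map $\tilde\phi$ is the inverse of the paper's $\varphi_\sigma$ (so your $\phi^*$ is $\psi_\sigma^{-1}$ rather than $\psi_\sigma$ itself, which is of course equivalent), and you include an explicit algebraic check that $\psi_\sigma$ descends to cohomology---redundant once the diffeomorphism is in hand, but harmless.
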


\begin{proof}
Remember the quotient construction of Bott manifolds explained in Section~\ref{sect:2}.
Let $\varphi_{\sigma}\colon (S^3)^n\to (S^3)^n$ be the coordinate change defined by
\[
\begin{split}
&\varphi_{\sigma}((z_1,w_1),(z_2,w_2),\dots,(z_n,w_n)\big)\\
=&\big((z_{\sigma(1)},w_{\sigma(1)}),(z_{\sigma(2)},w_{\sigma(2)}),\dots,(z_{\sigma(n)},w_{\sigma(n)})\big)
\end{split}
\]
where we consider the $(S^1)^n$-action associated to the matrix $PAP^{-1}$ on the source space and the one associated to $A$ on the target space, and let $\phi_\sigma$ be the group automorphism of $(S^1)^n$ defined by
\begin{equation*}
\phi_{\sigma}(g_1,g_2,\dots,g_n)=(g_{\sigma(1)},g_{\sigma(2)},\dots,g_{\sigma(n)}).
\end{equation*}
Then, $\varphi_{\sigma}$ is $\phi_\sigma$-equivariant, i.e.,
\begin{equation} \label{eq:6.0}
\varphi_\sigma(g\cdot (z,w))=\phi_\sigma(g)\cdot\varphi_\sigma((z,w)),
\end{equation}
where $g=(g_1,\dots,g_n)$ and $(z,w)=((z_1,w_1),(z_2,w_2),\dots,(z_n,w_n))$.
Indeed, the $j$-th component of the left hand side of \eqref{eq:6.0} is
\begin{equation} \label{eq:6.1}
\Big(\big(\prod_{k=1}^{\sigma(j)-1}g_k^{-(PAP^{-1})_{\sigma(j)}^k}\big)g_{\sigma(j)}z_{\sigma(j)},g_{\sigma(j)}w_{\sigma(j)}\Big)
\end{equation}
while that of the right hand side of \eqref{eq:6.0} is
\begin{equation} \label{eq:6.2}
\Big(\big(\prod_{i=1}^{j-1}g_{\sigma(i)}^{-A^i_j}\big)g_{\sigma(j)}z_{\sigma(j)},g_{\sigma(j)}w_{\sigma(j)}\Big).
\end{equation}
Here, since $P$ is the permutation matrix of $\sigma^{-1}$, $(PAP^{-1})^{\sigma(i)}_{\sigma(j)}=A^{i}_{j}$ and it is zero for $\sigma(i)\ge \sigma(j)$ or $i\ge j$ since both $PAP^{-1}$ and $A$ belong to $\MB_n$; so we have
\[
\prod_{k=1}^{\sigma(j)-1}g_{k}^{-(PAP^{-1})^k_{\sigma(j)}}=\prod_{k=1}^{n}g_{k}^{-(PAP^{-1})^k_{\sigma(j)}}=\prod_{i=1}^{n}g_{\sigma(i)}^{{-(PAP^{-1})}^{\sigma(i)}_{\sigma(j)}}=\prod_{i=1}^{n}g_{\sigma(i)}^{-A^i_j}=\prod_{i=1}^{j-1}g_{\sigma(i)}^{-A^i_j}.
\]
This shows that \eqref{eq:6.1} and \eqref{eq:6.2} agree, which means that $\varphi_{\sigma}$ is $\phi_{\sigma}$-equivariant and hence $\varphi_{\sigma}$ induces a diffeomorphism from $M(PAP^{-1})$ to $M(A)$.

It remains to prove that the cohomology ring isomorphism induced by $\varphi_{\sigma}$ maps $\xa_j$ to $\xp_{\sigma(j)}$.  Remember that  $\xa_{j}$ and $\xp_{\sigma(j)}$  are the first Chern classes of the complex line bundles $\gamma_{j}^A$ and $\gamma_{\sigma(j)}^{PAP^{-1}}$ mentioned in Section~\ref{sect:2}, where the matrices $A$ and $PAP^{-1}$ are specified to avoid confusion.  Therefore, it suffices to find a bundle isomorphism from $\gamma_{\sigma(j)}^{PAP^{-1}}$ to $\gamma_{j}^A$ which covers $\varphi_\sigma$. The map $f_\sigma$ from $(S^3)^n\times \C$ to itself defined by
\[
f_\sigma((z,w),u)=(\varphi_\sigma(z,w), u)
\]
satisfies
\[
f_\sigma(g\cdot(z,w),g_{\sigma(j)}^{-1}u)=(\phi_{\sigma}(g)\cdot(z,w),g^{-1}_{\sigma(j)}u).
\]
Since $g_{\sigma(j)}$ is the $j$-th component of $\phi_{\sigma}(g)$, $f_\sigma$ induces the desired bundle isomorphism from $\gamma_{\sigma(j)}^{PAP^{-1}}$ to $\gamma_{j}^A$.
\end{proof}

The following theorem due to H. Ishida plays a role in our argument.

\begin{theo}[\cite{ishi12}] \label{theo:ishi}
Let $A,B\in \MB_n$ and $\psi\colon H^*(M(A);\Z)\to H^*(M(B);\Z)$ be a graded ring isomorphism.  If $\psi$ restricted to the degree two cohomology groups 
is represented as an upper triangular matrix with respect to the basis $\xa_1,\dots,\xa_n$ of $H^2(M(A);\Z)$ and $\xb_1,\dots,\xb_n$ of $H^2(M(B);\Z)$ defined in Section~\ref{sect:2}, then $\psi$ is induced from a diffeomorphism.
\end{theo}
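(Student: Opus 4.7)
The plan is to induct on the tower height $n$. The base case $n=1$ is immediate: any graded ring isomorphism $H^*(\C P^1;\Z)\to H^*(\C P^1;\Z)$ sends $\xa_1$ to $\pm\xb_1$ and is realized by the identity or by complex conjugation on $\C P^1$.

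For the inductive step, let $(c_{ij})$ denote the upper triangular matrix representing $\psi$ on degree two with respect to the bases $\{\xa_j\}$ and $\{\xb_j\}$. Because $\psi$ is an integral isomorphism, $(c_{ij})$ is unimodular, so each diagonal entry $c_{jj}$ equals $\pm 1$. In particular $\psi$ sends the subring generated by $\xa_1,\dots,\xa_{n-1}$ into the one generated by $\xb_1,\dots,\xb_{n-1}$; via the Bott tower projections these subrings are canonically identified with $H^*(B^A_{n-1};\Z)$ and $H^*(B^B_{n-1};\Z)$ respectively. Thus $\psi$ restricts to an upper triangular graded ring isomorphism $\bar\psi\colon H^*(B^A_{n-1};\Z)\to H^*(B^B_{n-1};\Z)$, and by the inductive hypothesis $\bar\psi=\bar\phi^*$ for some diffeomorphism $\bar\phi\colon B^B_{n-1}\to B^A_{n-1}$.

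It remains to lift $\bar\phi$ to a diffeomorphism $\phi\colon M(B)\to M(A)$ inducing $\psi$. Write $\psi(\xa_n)=\epsilon_n\xb_n+v$ with $\epsilon_n=c_{nn}\in\{\pm1\}$ and $v\in H^2(B^B_{n-1};\Z)$. Applying $\psi$ to the defining relation $\xa_n^2=\ala_n\xa_n$ and comparing the $\xb_n$-linear and $\xb_n$-free components (using $\xb_n^2=\alb_n\xb_n$) yields
\[
\bar\psi(\ala_n)=\epsilon_n\alb_n+2v \qquad\text{and}\qquad v(v+\epsilon_n\alb_n)=0.
\]
Hence $c_1(\bar\phi^*L^A_n)=\epsilon_n\alb_n+2v$. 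Since $B^B_{n-1}$ is simply connected with torsion-free cohomology, line bundles there are classified by $c_1$, so there is a line bundle $N$ on $B^B_{n-1}$ with $c_1(N)=v$ and $\bar\phi^*L^A_n\cong(L^B_n)^{\epsilon_n}\otimes N^2$. Therefore $\bar\phi^*M(A)\cong P(\uC\oplus(L^B_n)^{\epsilon_n}\otimes N^2)$ as $\C P^1$-bundles over $B^B_{n-1}$.

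The crux of the proof, and its main obstacle, is to construct a diffeomorphism of total spaces
\[
M(B)=P(\uC\oplus L^B_n) \;\longrightarrow\; P(\uC\oplus(L^B_n)^{\epsilon_n}\otimes N^2)
\]
over $B^B_{n-1}$. The two projective bundles are generically non-isomorphic as $\C P^1$-bundles (the class $2v$ genuinely alters the bundle structure), but the relation $v(v+\epsilon_n\alb_n)=0$ --- which is exactly the primitive vanishing pair condition of Lemma~\ref{lemm:1.3} --- allows their total spaces to be diffeomorphic. The plan is to exhibit $M(B)$ as an alternative Bott manifold $M(B')$ for some $B'\in\MB_n$ whose top line bundle has first Chern class $\epsilon_n\alb_n+2v$: writing $v=\sum_{i<n}b_i\xb_i$, one modifies the $n$-th column of $B$ by the $b_i$ and appropriate correction terms, and then constructs an explicit $(S^1)^n$-equivariant change of coordinates on $(S^3)^n$ descending to a diffeomorphism $M(B)\to M(B')$, in the spirit of the quotient-level constructions in the proofs of Lemmas~\ref{lemm:4.1} and~\ref{lemm:6.1}. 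The hard part, relative to Lemma~\ref{lemm:4.1}, is that one no longer has the convenient even exceptional type hypothesis, so the modification must be built directly from an arbitrary $v$ satisfying the quadratic relation $v(v+\epsilon_n\alb_n)=0$, and one must verify that the resulting coordinate change is globally defined rather than merely fiberwise. Once the diffeomorphism $M(B)\cong M(B')$ is in hand, composing with the bundle isomorphism $M(B')\cong\bar\phi^*M(A)$ (together with a fibrewise swap of summands $L\leftrightarrow L^{-1}$ if $\epsilon_n=-1$) and with the canonical projection $\bar\phi^*M(A)\to M(A)$ yields the desired $\phi$, and $\phi^*=\psi$ is verified on degree two generators directly from the construction.
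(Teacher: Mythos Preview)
This theorem is not proved in the present paper: it is quoted from Ishida's work \cite{ishi12} and used as a black box. So there is no ``paper's own proof'' to compare against beyond the citation itself.

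That said, your outline is the right shape and is essentially the skeleton of Ishida's argument: induct on the tower height, use upper-triangularity to peel off the top stage, and reduce to producing a bundle-covering diffeomorphism at the top. The reduction you carry out down to the relation $v(v+\epsilon_n\alb_n)=0$ and the identification $\bar\phi^*L^A_n\cong (L^B_n)^{\epsilon_n}\otimes N^2$ is correct.

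The genuine gap is exactly the part you flag as ``the crux'' and then do not execute: constructing a diffeomorphism
\[
P(\uC\oplus L^B_n)\;\longrightarrow\;P\bigl(\uC\oplus (L^B_n)^{\epsilon_n}\otimes N^2\bigr)
\]
over $B^B_{n-1}$. This is the entire content of Ishida's theorem; everything else is bookkeeping. Your proposal to ``modify the $n$-th column of $B$'' and write down an $(S^1)^n$-equivariant coordinate change on $(S^3)^n$ is the right idea, but the difficulty is real and you have not resolved it. In Ishida's proof the diffeomorphism is produced by an explicit fibrewise quadratic map on the unit sphere bundle of $\uC\oplus L^B_n$, and the relation $v(v+\epsilon_n\alb_n)=0$ is precisely the integrability condition that makes this map extend smoothly over the base; without writing that map down and checking it, the argument is incomplete. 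Note also that the analogy with Lemma~\ref{lemm:4.1} is a bit misleading: there the change of top line bundle is by a \emph{square} $N^2$ with $c_1(N)$ of a very special (even exceptional) form, which allows a linear coordinate change; for general $v$ the required map is genuinely nonlinear on fibres, and this is where the work lies.
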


Let $\psi:H^*(M(A);\Z) \to H^*(M(B);\Z)$ be a graded ring isomorphism. By Proposition~\ref{prop:3.1}, there is a permutation $\sigma$ on $[n]$ such that $\psi(y_j^A)=\q_j y_{\sigma(j)}^B$ with some nonzero $\q_j \in \Q$ for any $j$.


\begin{prop} \label{prop:6.1}
If $q_j \in \{\pm 1\}$ for all $j$, then the isomorphism $\psi$ above is induced from a diffeomorphism.
\end{prop}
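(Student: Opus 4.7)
The plan is to modify $\psi$ by diffeomorphism-induced isomorphisms until it becomes upper triangular in the $x$-bases, at which point Theorem~\ref{theo:ishi} of Ishida immediately concludes. Two reductions are needed: Lemma~\ref{lemm:4.1} to remove $\alpha$'s of even exceptional type, and Lemma~\ref{lemm:6.1} to absorb the permutation $\sigma$.

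First, applying Lemma~\ref{lemm:4.1} and Remark~\ref{rema:4.1} to both $M(A)$ and $M(B)$ produces $A',B'\in\MB_n$ (neither with an $\alpha$ of even exceptional type) together with diffeomorphism-induced isomorphisms $\psi_A\colon H^*(M(A);\Z)\to H^*(M(A');\Z)$ and $\psi_B\colon H^*(M(B);\Z)\to H^*(M(B');\Z)$.  The proof of Lemma~\ref{lemm:4.1} exhibits both of these as unipotent upper triangular matrices in the $x$-bases, so both carry $\q_j=1$ for all $j$.  Replacing $\psi$ by $\psi_B\circ\psi\circ\psi_A^{-1}$, I may therefore assume that neither $A$ nor $B$ has any $\alpha$ of even exceptional type, while preserving the hypothesis $\q_j=\pm 1$ and the permutation $\sigma$.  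Now for each $j$, examine the primitive vanishing pair $(\psi(\xa_j),\psi(\xa_j-\ala_j))$ via Lemma~\ref{lemm:3.1-1}: the latter case of \eqref{eq:3.5} would give $\ala_j=2a\q_i^{-1}\ya_i=\pm 2a\,\ya_i$, which is of even exceptional type and contradicts the reduction.  Hence the former case of \eqref{eq:3.5} holds for every $j$, yielding $\psi(\xa_j)=\pm\xb_{\sigma(j)}+u_j$ with $\height(u_j)<\sigma(j)$, together with the vanishing pattern $a^i_j=0$ whenever $\sigma(i)>\sigma(j)$, where $a^i_j$ is the $(i,j)$-entry of $Q:=(E-\frac{1}{2}A)^{-1}$.

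Next, let $P$ be the permutation matrix as in Lemma~\ref{lemm:6.1}, so $P^i_j=1$ iff $i=\sigma(j)$.  Since $(PQP^{-1})^{\sigma(i)}_{\sigma(j)}=Q^i_j=a^i_j$, the vanishing pattern above says precisely that $PQP^{-1}$ is upper triangular.  Its inverse $P(E-\frac{1}{2}A)P^{-1}=E-\frac{1}{2}PAP^{-1}$ is then upper triangular with $1$'s on the diagonal, which forces $PAP^{-1}\in\MB_n$.  By Lemma~\ref{lemm:6.1} there is therefore a diffeomorphism-induced isomorphism $\psi_\sigma\colon H^*(M(A);\Z)\to H^*(M(PAP^{-1});\Z)$ with $\psi_\sigma(\xa_j)=\xp_{\sigma(j)}$.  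Writing $k=\sigma(j)$, the composite $\psi\circ\psi_\sigma^{-1}$ sends $\xp_k$ to $\pm\xb_k+u_j$, with $u_j$ a $\Z$-linear combination of $\xb_\ell$ for $\ell<k$; so it is represented by an upper triangular matrix in the $x$-bases.  Theorem~\ref{theo:ishi} then shows that $\psi\circ\psi_\sigma^{-1}$ is induced by a diffeomorphism, and hence so is $\psi=(\psi\circ\psi_\sigma^{-1})\circ\psi_\sigma$.

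The main obstacle is the matrix identification $PAP^{-1}\in\MB_n$: the vanishing data furnished by Lemma~\ref{lemm:3.1-1} is phrased in terms of $Q=(E-\frac{1}{2}A)^{-1}$ rather than $A$ itself, and the trick is to recognize that upper triangularity of $PQP^{-1}$ transfers to $PAP^{-1}$ via inversion under $P$-conjugation.  Once this is in hand, the rest is bookkeeping, since the $\sigma$-permutation is absorbed by Lemma~\ref{lemm:6.1} and the residual upper-triangular piece is handled by Theorem~\ref{theo:ishi}.
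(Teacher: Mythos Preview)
Your proof is correct and follows essentially the same route as the paper's: reduce via Lemma~\ref{lemm:4.1} and Remark~\ref{rema:4.1} so that no $\ala_j$ is of even exceptional type, use Lemma~\ref{lemm:3.1-1} to force the former case of \eqref{eq:3.5} and obtain $a^i_j=0$ for $\sigma(i)>\sigma(j)$, deduce $PAP^{-1}\in\MB_n$ by the inversion trick, and then compose with $\psi_\sigma^{-1}$ from Lemma~\ref{lemm:6.1} to get an upper triangular isomorphism handled by Theorem~\ref{theo:ishi}. The only cosmetic differences are that you also reduce $B$ (harmless but unnecessary) and that you verify upper-triangularity of $\psi\circ\psi_\sigma^{-1}$ directly in the $x$-basis, whereas the paper checks it in the $y$-basis via $(\psi\circ\psi_\sigma^{-1})(\yp_j)=\q_{\sigma^{-1}(j)}\yb_j$.
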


\begin{proof}
We may assume that none of $\ala_1,\dots,\ala_n$ are of even exceptional type by Lemma~\ref{lemm:4.1} and Remark~\ref{rema:4.1} (replace $A$ with the matrix given in Lemma~\ref{lemm:4.1} if necessary). Let $a_j^i$ be the $(i,j)$ entry of $(E-\frac 1 2 A)^{-1}$. Because none of $\ala_1,\dots,\ala_n$ are of even exceptional type and $\q_k \in \{\pm 1\}$ for all $k$, we have $a_j^i=0$ for $\sigma(i)> \sigma(j)$ by Lemma~\ref{lemm:3.1-1}.
This means that if $P$ is the permutation matrix of $\sigma^{-1}$, then $P(E-\frac{1}{2}A)^{-1}P^{-1}$ is a unipotent upper triangular matrix because the $(\sigma(i),\sigma(j))$ entry of  $P(E-\frac{1}{2}A)^{-1}P^{-1}$ is equal to the $(i,j)$ entry $a^i_j$ of $(E-\frac{1}{2}A)^{-1}$.  Then, since
$
P(E-\frac{1}{2}A)P^{-1}=\big(P(E-\frac{1}{2}A)^{-1}P^{-1}\big)^{-1}
$
is a unipotent upper triangular matrix, $PAP^{-1}$ is a strictly upper triangular matrix and hence $PAP^{-1}\in \MB_n$.

Let $\psi_{\sigma}\colon H^*(M(A);\Z)\to H^*(M(PAP^{-1});\Z)$ be the graded ring isomorphism in Lemma~\ref{lemm:6.1}.  Then the graded ring isomorphism arising from the composition
\[
\psi\circ \psi_{\sigma}^{-1}\colon H^*(M(PAP^{-1});\Z)\to H^*(M(A);\Z)\to H^*(M(B);\Z)
\]
satisfies $(\psi\circ\psi_{\sigma}^{-1})(\yp_j)=\q_{\sigma^{-1}(j)}\yb_j$ for any $j$ since $\psi_\sigma(\xa_i)=\xp_{\sigma(i)}$ for any $i$.  This means that  $\psi\circ \psi_{\sigma}^{-1}$ satisfies the assumption in Theorem~\ref{theo:ishi} and hence is induced from a diffeomorphism.  Moreover, $\psi_{\sigma}$ is induced from a diffeomorphism by Lemma~\ref{lemm:6.1}. This shows that $\psi$ is induced from a diffeomorphism.
\end{proof}


Remember that $M(O)=(\C P^1)^n$ for the zero matrix $O\in \MB_n$.  We say that a Bott manifold $M(A)$ for $A\in \MB_n$ is \emph{$\Z/2$-trivial} if $H^*(M(A);\Z/2)\cong H^*(M(O);\Z/2)$ as graded rings.

\begin{lemm} \label{lemm:6.0}
Let $A\in \MB_n$.  Then the following three statements are equivalent.
\begin{enumerate}
\item $M(A)$ is $\Z/2$-trivial.
\item $\ala_j\equiv 0 \pmod 2$ for $j=1,2,\dots,n$.
\item $A\equiv O\pmod 2$, i.e., every entry of $A$ is an even integer.
\end{enumerate}
\end{lemm}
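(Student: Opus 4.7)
The plan is to prove the chain $(2)\Leftrightarrow(3)$, then $(2)\Rightarrow(1)$, and finally $(1)\Rightarrow(2)$; only the last implication carries any content.

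The equivalence $(2)\Leftrightarrow(3)$ is immediate from \eqref{eq:1.2-1} together with Lemma~\ref{lemm:1.1}: since $\ala_j=\sum_{i<j}A_j^i \xa_i$ and $\{\xa_1,\dots,\xa_n\}$ is a $\Z$-basis of $H^2(M(A);\Z)$, the class $\ala_j$ is divisible by~$2$ if and only if every entry $A_j^i$ with $i<j$ is even; running this over all $j$ matches conditions (2) and (3) precisely. For $(2)\Rightarrow(1)$, we just reduce the presentation \eqref{eq:1.2} modulo~$2$: under hypothesis~(2) all $\ala_j$ vanish in $\Z/2$-coefficients, so the defining relations collapse to $\xa_j^2=0$, giving an explicit isomorphism $H^*(M(A);\Z/2)\cong \Z/2[\xa_1,\dots,\xa_n]/(\xa_j^2\mid j=1,\dots,n)\cong H^*((\C P^1)^n;\Z/2)$.

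For the substantive direction $(1)\Rightarrow(2)$, the key observation is that in $H^*((\C P^1)^n;\Z/2)=\Z/2[y_1,\dots,y_n]/(y_j^2)$, \emph{every} degree-two class squares to zero by the freshman's dream: if $v=\sum c_j y_j$ with $c_j\in\Z/2$, then $v^2=\sum c_j^2 y_j^2=0$. Thus under the ring isomorphism posited in (1), every $v\in H^2(M(A);\Z/2)$ must satisfy $v^2=0$. Applying this to $v=\xa_j$ and using the relation $\xa_j^2=\ala_j \xa_j=\sum_{i<j}A_j^i \xa_i\xa_j$ from \eqref{eq:1.2}, we obtain $\sum_{i<j}A_j^i \xa_i\xa_j\equiv 0 \pmod 2$. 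The mod-$2$ analogue of Lemma~\ref{lemm:1.1}, whose proof from the presentation \eqref{eq:1.2} descends verbatim to $\Z/2$-coefficients, asserts that $\{\xa_i\xa_j\mid i<j\}$ is a $\Z/2$-basis of $H^4(M(A);\Z/2)$; this forces every $A_j^i$ with $i<j$ to be even, which is statement~(3) and hence also~(2).

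No serious obstacle is anticipated. The one subtlety worth flagging is the use of Lemma~\ref{lemm:1.1} with $\Z/2$-coefficients, but this is immediate: the Borel--Hirzebruch free-module decomposition $H^*(B_j;R)\cong H^*(B_{j-1};R)\cdot 1\oplus H^*(B_{j-1};R)\cdot \xa_j$ holds for any coefficient ring $R$, so the inductive argument yielding the monomial basis goes through unchanged with $R=\Z/2$.
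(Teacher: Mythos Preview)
Your proof is correct and follows essentially the same route as the paper's: both note that $(2)\Leftrightarrow(3)$ and $(2)\Rightarrow(1)$ are immediate, and for $(1)\Rightarrow(2)$ both observe that $\Z/2$-triviality forces $(\xa_j)^2=\ala_j\xa_j$ to vanish in $H^4(M(A);\Z/2)$, then read off the vanishing of the coefficients using the square-free monomial basis from Lemma~\ref{lemm:1.1}. The only cosmetic difference is that the paper justifies the $\Z/2$-basis by reducing the integral basis of Lemma~\ref{lemm:1.1} mod~$2$ (legitimate since $H^*(M(A);\Z)$ is torsion-free), whereas you rerun the Borel--Hirzebruch argument directly over~$\Z/2$; both are fine.
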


\begin{proof}
The equivalence $(2)\Leftrightarrow(3)$ and the implication $(2)\Rightarrow (1)$ are obvious, so it suffices to prove the implication $(1)\Rightarrow (2)$.  Suppose that $M(A)$ is $\Z/2$-trivial.  Then the square of any element in $H^2(M(A);\Z/2)$ vanishes.  Therefore $\ala_j\xa_j(={\xa_j}^2)$ vanishes in $H^4(M(A);\Z/2)$ for any $j$.  On the other hand, the set $\{\xa_i\xa_j\mid 1\le i<j\le n\}$ is an additive basis of $H^4(M(A);\Z/2)$ because it is an additive basis of $H^4(M(A);\Z)$ by Lemma~\ref{lemm:1.1}.  Since $\ala_j$ is a linear combination of $\xa_i$ for $1\le i<j$, this implies that $\ala_j=0$ in $H^2(M(A);\Z/2)$ for any $j$, proving (2).
\end{proof}


Now we are in a position to prove our second main result stated in the Introduction.

\begin{proof}[Proof of Theorem~\ref{theo:0.2}.]
Let $M(A)$ and $M(B)$ be $\Z/2$-trivial Bott manifolds and let $\psi\colon H^*(M(A);\Z)\to H^*(M(B);\Z)$ be any graded ring isomorphism.
Since $M(A)$ and $M(B)$ are $\Z/2$-trivial, $\ala_j\equiv 0\pmod 2$ and $\alb_j\equiv 0\pmod 2$ for any $j$ by Lemma~\ref{lemm:6.0}.  This together with Lemma~\ref{lemm:3.1} shows that the assumption of Proposition~\ref{prop:6.1} is satisfied for the $\psi$, so the theorem follows from Proposition~\ref{prop:6.1}.
\end{proof}


\section{Concluding remarks} \label{sect:7}

We conclude this paper with some remarks on automorphisms of the cohomology ring of a Bott manifold $B_n$.  Proposition~\ref{prop:3.1} and Lemma~\ref{lemm:3.1} say that given a graded ring automorphism $\psi$ of $H^*(B_n;\Z)$, there are a permutation $\sigma$ on $[n]$ and $\q_j\in \{\pm\frac{1}{2}, \pm 1,\pm 2\}$ such that $\psi(y_j)=\q_jy_{\sigma(j)}$ for each $j=1,2,\dots,n$. Therefore, assigning $(\q_j/|\q_j|)_{j=1}^n$ together with $\sigma$ to $\psi$, we obtain a monomorphism
\begin{equation} \label{eq:7.1}
\Aut(H^*(B_n;\Z))\hookrightarrow \{\pm 1\}^n\rtimes \frak S_n.
\end{equation}
Here $\Aut(H^*(B_n;\Z))$ denotes the group of graded ring automorphisms of $H^*(B_n;\Z)$ and $\{\pm 1\}^n\rtimes \frak S_n$ is the signed permutation group on $[n]$, that is, the semidirect product of the $n$-fold product $\{\pm 1\}^n$ of the order two group $\{\pm 1\}$ and the permutation group $\frak S_n$ on $[n]$ where the action of $\frak S_n$ on $\{\pm 1\}^n$ is the natural permutation of factors of $\{\pm 1\}^n$. Lemma 4.2 in \cite{ishi12} implies that
any automorphism in the subgroup $\{\pm 1\}^n$ of $\{\pm 1\}^n\rtimes \frak S_n$ can be realized by diffeomorphisms of $B_n$ and our Lemma~\ref{lemm:6.1} implies that some elements of $\frak S_n$ can also be realized by diffeomorphisms of $B_n$.
However, any $\q_j$ is equal to $\pm 1$ for the cohomology automorphisms induced from those diffeomorphisms,
so if some $q_j$ is not $\pm 1$ then our results do not prove that the cohomology automorphism is induced from a diffeomorphism.
This is the reason why we needed to assume the $\Z/2$-triviality in Theorem~\ref{theo:0.2}.

As remarked in the Introduction, the Hirzebruch surface $\C P^2\sharp \overline{\C P^2}$ is not $\Z/2$-trivial (although it is $\Q$-trivial).  If we put $B_2=\C P^2\sharp \overline{\C P^2}$, then
\[
H^*(B_2;\Z)=\Z[x_1,x_2]/(x_1^2, x_2^2-x_1x_2),
\]
so $y_1=x_1$ and $y_2=x_2-\frac{1}{2}x_1$.  The map \eqref{eq:7.1} above is an isomorphism in this case.  In fact, $\Aut(H^*(B_2;\Z))$ is generated as a group by the following three automorphisms:
\begin{enumerate}
\item $(x_1,x_2)\to (-x_1,-x_1+x_2)$, so $(y_1,y_2)\to (-y_1,y_2)$,
\item $(x_1,x_2)\to (x_1,x_1-x_2)$, so $(y_1,y_2)\to (y_1,-y_2)$,
\item $(x_1,x_2)\to (-x_1+2x_2,x_2)$, so $(y_1,y_2)\to (2y_2,\frac{1}{2}y_1)$.
\end{enumerate}
The automorphisms (1) and (2) generate the subgroup $\{\pm 1\}^2$ while the automorphism (3) generates the subgroup $\frak S_2$.  As remarked above, the automorphisms (1) and (2) are induced from diffeomorphisms of $B_2$.  One can see that the automorphism (3) is also induced from a diffeomorphism of $B_2$, in fact, the diffeomorphism of Type 2 in the proof of \cite[Lemma 5.2]{ch-ma12} induces the automorphism (3) up to sign.  For the convenience of the reader, we shall review the construction of the diffeomorphism.
We take involutions
\[
\text{$s\colon [z_1,z_2,z_3]\to [\bar z_1,\bar z_2,\bar z_3]$ on $\C P^2$, \quad $t\colon [z_1,z_2,z_3]\to [z_1,z_2,-z_3]$ on $\overline{\C P^2}$}
\]
where $[z_1,z_2,z_3]$ denotes the homogeneous coordinate of $\C P^2$ and $\bar{z}$ denotes the conjugate of a complex number $z$. The fixed point set by $s$ is $\R P^2$ while that by $t$ consists of a point and $\C P^1$.  Choose a point from $\R P^2$ and $\C P^1$ respectively and take the equivariant connected sum of $\C P^2$ and $\overline{\C P^2}$ around the chosen fixed points.  Then the resulting involution on $B_2=\C P^2\sharp \overline{\C P^2}$ is the desired one, see the proof of \cite[Lemma 5.2]{ch-ma12} for more details.
As you see, this construction heavily depends on the description $\C P^2\sharp \overline{\C P^2}$ 
and we do not know how to find such a diffeomorphism for a not $\Z/2$-trivial Bott manifold $B_n$ although such a diffeomorphism is found when $B_n$ is $\Q$-trivial (\cite{ch-ma12}).  If we overcome this difficulty, then we could solve the strong cohomological rigidity conjecture for Bott manifolds completely.


\begin{thebibliography}{19}

\bibitem{bo-hi58}
A.~Borel and F.~Hirzebruch,
\emph{Characteristic classes and homogeneous spaces I},
Amer. J. Math., {\bf 80} (1958), 458--538.


\bibitem{choi11}
S. Choi,
\emph{Classification of Bott manifolds up to dimension eight},
to appear in Proc. Edinburgh Math. Soc.; arXiv:1112.2321.

\bibitem{ch-ma12}
S.~Choi and M.~Masuda,
\emph{Classification of $\Q$-trivial Bott manifolds},
J. Symp. Geom. {\bf 10} (2012), 447--462.



\bibitem{ch-ma-su12}
S.~Choi, M.~Masuda, and D.~Y.~Suh,
\emph{Rigidity problems in toric topology, a survey},
Proc. Steklov Inst. Math. {\bf 275} (2011), 177--190.

\bibitem{ci-ra05}
Y. Civan and N. Ray.
\emph{Homotopy decompositions and $K$-theory of Bott towers}.
$K$-Theory~{\bf34} (2005), 1--33.

\bibitem{cr-kr11}
D.~Crowley and M.~Kreck,
\emph{Hirzebruch surfaces},
Bulletin of the Manifold Atlas, 2011.

\bibitem{ishi12}
H.~Ishida,
\emph{(Filtered) cohomological rigidity of Bott manifolds},
Osaka J. Math. {\bf 49} (2012), 515--522.

\bibitem{ma-pa08}
M.~Masuda and T.~E. Panov,
\emph{Semi-free circle actions, {B}ott towers, and quasitoric manifolds},
Mat. Sb. {\bf 199}~(8) (2008) 95--122.

\bibitem{ma-su08}
M.~Masuda and D.~Y.~Suh,
\emph{Classification problems of toric manifolds via topology},
in: Toric topology, Contemp. Math., Amer. Math. Soc. {\bf 460}, Providence, RI, 2008, 273--286.


\bibitem{petr72}
T.~Petrie, \emph{Smooth {$S^{1}$} actions on homotopy complex projective spaces
  and related topics}, Bull. Amer. Math. Soc. \textbf{78} (1972), 105--153. 

\bibitem{petr73}
T.~Petrie, \emph{Torus actions on homotopy complex projective spaces}, Invent.
  Math. \textbf{20} (1973), 139--146. 

\bibitem{mani_atlas}
Manifold Atlas Project, \url{http://www.map.mpim-bonn.mpg.de/Petrie\_conjecture}
\end{thebibliography}
\end{document}